\numberwithin{equation}{section}
\def\hangbox to #1 #2{\vskip3pt\hangindent #1\noindent \hbox to #1{#2}$\!\!$}
\newtheorem{thm}{Theorem}[section]
\newtheorem{lem}[thm]{Lemma}
\newtheorem{cor}[thm]{Corollary}
\newtheorem{prob}[thm]{Problem}
\newtheorem{prop}[thm]{Proposition}
\theoremstyle{definition}
\theoremstyle{remark}
\def\N{{\mathbb N}}
\def\R{{\mathbb R}}
\def\E{{\mathbb E}}
\def\sfrac#1#2{\kern.1em\raise.5ex\hbox{$#1$}
        \kern-.1em/\kern-.05em\lower.25ex\hbox{$#2$}}
\def\vp{\varepsilon}
\newcommand{\HH}{\mathbb H}
\newcommand{\fw}{\text{\fw}}
\newcommand{\spn}{{\rm span}}
\begin{document}
\allowdisplaybreaks
\title{Discretizing $L_p$ norms and frame theory}

\author{ Daniel Freeman}\author{ Dorsa Ghoreishi}
\address{Department of Mathematics and Statistics\\
St Louis University\\
St Louis MO 63103  USA
} \email{daniel.freeman@slu.edu, dorsa.ghoreishi@slu.edu}

\begin{abstract}
    Given an $N$-dimensional subspace $X^N$ of $L_p([0,1])$, we consider the problem of choosing $M$-sampling points which may be used to discretely approximate the $L_p$ norm on the subspace.   We are particularly interested in knowing when the number of sampling points $M$ can be chosen on the order of the dimension $N$.  For the case $p=2$ it is known that $M$ may always be chosen on the order of $N$ as long as the subspace $X^N$ satisfies a natural $L_\infty$ bound, and for the case $p=\infty$ there are examples where $M$ may not be chosen on the order of $N$.   We show for all $1\leq p<2$ that there exist classes of subspaces of $L_p([0,1])$ which satisfy the  $L_\infty$ bound, but where the number of sampling points $M$ cannot be chosen on the order of $N$.
    We show as well that the problem of discretizing the $L_p$ norm of subspaces is directly connected with frame theory.
    In particular, we prove that discretizing a continuous frame to obtain a discrete frame which does stable phase retrieval requires discretizing both the $L_2$ norm and the $L_1$ norm on the range of the analysis operator of the continuous frame.

\end{abstract}

\thanks{2010 \textit{Mathematics Subject Classification}: 46B03, 46B15, 46E30, 42C15}

\thanks{The first author was supported by grant 706481 from the Simons Foundation.}

\maketitle

\section{Introduction}

Many problems in applied math, physics, and engineering are stated in terms of some continuous structure, but only become computationally feasible when appropriately discretized.  Some examples of this are using numerical integration to estimate the integral of a function over a measure space, or using a fast Fourier transform as a discretization of the Fourier transform.  We will be considering the problem of discretizing the $L_p$ norm on finite dimensional subspaces of $L_p(\Omega)$ where $1\leq p<\infty$ and $\Omega$ is a probability space. We let $X^N\subseteq L_p(\Omega)$ be an $N$-dimensional subspace and note that 
\begin{equation}\label{E:int}
    \|f\|^p_p=\int_\Omega |f(t)|^p \, d\mu(t)\hspace{.5cm}\textrm{ for all } 
    f\in X^N.
\end{equation}
Given some $A<1<B$, we are interested in the problem of discretizing \eqref{E:int} by choosing sampling points 
 $(t_j)_{j=1}^M\subseteq \Omega$  such that 
\begin{equation}\label{E:discI}
    A\|f\|^p_p\leq \frac{1}{M}\sum_{j=1}^M |f(t_j)|^p\leq B\|f\|^p_p\hspace{.5cm}\textrm{ for all } 
    f\in X^N.
\end{equation}
As $X^N$ is finite dimensional, the law of large numbers gives that if $M\in\N$ is large enough and we randomly choose sampling points $(t_j)_{j=1}^M\subseteq \Omega$ then \eqref{E:discI} is satisfied with high probability. However, this does not give any bounds on how large $M$ must be for a given $X^N\subseteq L_p(\Omega)$,
and as one of our motivations is to make a problem computationally feasible, it is of fundamental importance to obtain good bounds on the number of sampling points $M$.  This problem was first considered by Marcinkiewics and later by Zygmund for the case of discretizing the $L_p$ norm on spaces of trigonometric polynomials \cite{M36}\cite{Z59}.  Because of this, modern papers on the subject often refer to this class of problems as Marcinkiewics-type discretization problems. 
It has been recently proven that for all $1\leq p<\infty$ there are certain entropy conditions on $X^N\subseteq L_p(\Omega)$ which guarantee that the $L_p$-norm on $X^N$ can be discretized to satisfy \eqref{E:discI} using $M$ on the order of $N(\log(N))^2$ for $1\leq p\leq2$ and  $N(\log(N))^p$ for $p\geq2$ sampling points \cite{DPTT19},\cite{DPSTT21},\cite{K21},\cite{T18}. 
These entropy conditions can be fairly technical, but they imply in particular that there exists $\beta>0$ such that
\begin{equation}\label{E:Nik}
\|x\|_{L_\infty} \leq \beta N^{1/p}  \|x\|_{L_p}\hspace{1cm}\textrm{ for all }x\in X^N.
\end{equation}
The above inequality is referred to as a Nikol’skii-type inequality or $(p,\infty)$-Nikol'skii-type inequality.
Inequality \ref{E:Nik} is a very natural Banach space condition as it is equivalent to for all $t\in\Omega$ point evaluation at $t$ is a bounded linear functional on $X^N$ with norm at most $\beta N^{1/p}$.  For the case $p=2$, the celebrated solution to the Kadison-Singer problem \cite{MSS15} can be applied to show that if $X^N\subseteq L_2(\Omega)$ satisfies the inequality \eqref{E:Nik} then the $L_2$-norm on $X^N$ can be discretized using $M$ on the order of $N$ sampling points \cite{LT21}.  The main theorem in \cite{MSS15} was first used in discretization to prove that if $\Omega\subseteq\R$ is a subset with finite measure then $L_2(\Omega)$ has a frame of exponentials \cite{NOU16}, and was later used to prove that if $(x_t)_{t\in\Omega}$ is a bounded continuous frame of a Hilbert space $H$ then there is a sampling $(t_j)_{j\in J}\subseteq \Omega$ such that $(x_{t_j})_{j\in J}$ is a frame of $H$ \cite{FS19}.  

It was not previously known that if $X_N\subseteq L_p(\Omega)$ satisfied the Nikol'skii-type inequality \ref{E:Nik} for some $1\leq p<\infty$ with $p\neq 2$ then  the $L_p$-norm on $X^N$ could be discretized using $M$ on the order of $N$ sampling points.
In Section \ref{S:L1} we give a construction of a class of subspaces $X^N\subseteq L_1(\Omega)$ which satisfy \eqref{E:Nik} but the $L_1$-norm on $X^N$ cannot be discretized using $M$ on the order of $N$ sampling points, and in Section \ref{S:Lp} we give for each $1\leq p<2$ a construction of a class of subspaces $X^N\subseteq L_1(\Omega)$ which satisfy \eqref{E:Nik} but the $L_p$-norm on $X^N$ cannot be discretized using $M$ on the order of $N$ sampling points.  The case $2<p<\infty$ is still open.

In Section \ref{S:phase} we introduce the topics of frame theory and phase retrieval, and then show the important connection between them and norm discretization of subspaces of $L_p$.  In particular, we prove that discretizing a continuous frame to obtain a discrete frame which does stable phase retrieval requires discretizing both the $L_2$ norm and the $L_1$ norm on the range of the analysis operator of the continuous frame.

We explain in Section \ref{S:phase} that the solution to the discretization problem for continuous frames \cite{FS19} implies that there exists constants $A,B>0$ such that if $\Omega$ is any $\sigma$-finite measure space and $X\subseteq L_2(\Omega)$ is a subspace such that there exists $\beta>0$ with $\|x\|_{L_\infty}\leq \beta\|x\|_{L_2}$ for all $x\in X$ then there exists sampling points $(t_j)_{j\in J}\subseteq \Omega$ such that $\beta^2 A\|x\|_{L_2}\leq \sum_{j\in J}|x(t_j)|^2\leq \beta^2 B\|x\|_{L_2}$ for all $x\in X$.
In Section \ref{S:infinite} we use the finite dimensional results in Sections \ref{S:L1} and \ref{S:Lp} to prove that the corresponding result strongly fails for $1\leq p<2$.  That is, for all $1\leq p<2$ there exists a subspace $X\subseteq L_p(\R)$ with $X$ isomorphic to $\ell_p$ and $\|x\|_{L_\infty}\leq \|x\|_{L_p}$ for all $x\in X$ such that if $(t_j)_{j\in J}\subseteq \R$ is such that $\sum_{j\in J}|x(t_j)|^p>0$ for all $x\in X\setminus\{0\}$ then there exists $y\in X$ with  $\sum_{j\in J}|y(t_j)|^p=\infty$.

\section{Constructing subspaces of $L_1$}\label{S:L1}
In this section we will show how to construct subspaces of $L_1$ where the $L_1$-norm cannot be discretized using a number of sampling points on the order of the dimension of the subspace.  In particular, 
for all $\vp>0$, we construct a class of subspaces $X^N\subseteq L_1[0,1]$ of the form $X_N=T_N(\spn (\mathbbm{1}_{[\nicefrac{(j-1)}{N},\nicefrac{j}{N})})_{j=1}^N)$ where $T_N:L_1[0,1]\rightarrow L_1[0,1]$ is a linear operator such that $\|I_{L_1[0,1]}-T_N\|<\vp$ (where $I_{L_1[0,1]}$ is the identity operator on $L_1[0,1]$).  The subspace $X^N\subseteq L_1[0,1]$ that we construct  satisfies $\|f\|_{L_\infty}\leq (1+\vp)N\|f\|_{L_1}$ for all $f\in X^N$ and yet  the $L_1$-norm on $X^N$ cannot be discretized using $M=o(N\log(N)/(\log\log(N))$ sampling points.  That is, we can consider the simplest $N$-dimensional subspace of $L_1[0,1]$ and perturb it an arbitrarily small amount to create a subspace which still satisfies the boundedness condition \eqref{E:Nik} and yet the subspace cannot be discretized using $M=o(N\log(N)/(\log\log(N))$ sampling points.

We now describe how to construct the subspace $X^N\subseteq L_1[0,1]$.  Let $\vp>0$ and $n\in\N$.  Without loss of generality, we may assume that $1/\vp$ is an integer and let $N=n+(\vp^{-1}n)^n$.  We will construct a basis $(x_j)_{j=1}^{N}$ of $X_N$ which will be a perturbation of the sequence of indicator functions $(\mathbbm{1}_{[\nicefrac{(j-1)}{N}, \nicefrac{j}{N})})_{j=1}^N$.  For $1 \leq j \leq n$ we let,
$$x_j=N \mathbbm{1}_{[\nicefrac{(j-1)}{N}, \nicefrac{j}{N})}.$$

%--------Plots-------------

%2
\setlength{\unitlength}{.02in}%{.025in}
\begin{picture}(250,75)
\put(0,25){\vector(1,0){300}}
\put(25,0){\vector(0,1){75}}
\put(280,22){\line(0,1){6}}
\put(22,50){\line(1,0){6}}
\put(130,22){\line(0,1){6}}
\thicklines
\put(70,50){\line(1,0){15}}
\put(85,25){\line(0,1){25}}
\put(70,25){\line(0,1){25}}
\put(10,50){\makebox(0,0){$\small{N}$}}
\put(70,13){\makebox(0,0)[b]{$\tiny \frac{j-1}{N}$}}
\put(280,13){\makebox(0,0)[b]{$\small{1}$}}
\put(130,13){\makebox(0,0)[b]{$\tiny \frac{n}{N}$}}
\put(85,13){\makebox(0,0)[b]{$\tiny \frac{j}{N}$}}

\put(295,60){\makebox(0,0){$\small{x_j}$}}
\end{picture}

For each $1\leq k\leq n$ we partition the interval $[\nicefrac{(k-1)}{N},\nicefrac{k}{N})$ into $\vp^{-1}n$ intervals $(I_k(i))_{i=1}^{\vp^{-1}n}$ each of width $\vp n^{-1} N^{-1}$.  As $N=n+(\vp^{-1}n)^n$, we can now enumerate $\{1,2,...,\vp^{-1}n\}^n$ as $((i_{k,j})_{k=1}^n)_{j=n+1}^N$.  For $n < j \leq N$ we let,
$$x_j= N \sum_{k=1}^n \mathbbm{1}_{I_k(i_{k,j})}+N \mathbbm{1}_{[\frac{j-1}{N}, \frac{j}{N})}. $$
\vspace{0.2in}

%4
\setlength{\unitlength}{.02in}%{.025in}
\begin{picture}(250,75)
\put(0,25){\vector(1,0){300}}
\put(25,0){\vector(0,1){75}}
%\put(32,22){\line(0,1){6}}
\put(130,22){\line(0,1){6}}
\put(22,50){\line(1,0){6}}
\put(280,22){\line(0,1){6}}
\thicklines
\put(30,50){\line(1,0){5}}
\put(35,25){\line(0,1){25}}
\put(30,25){\line(0,1){25}}

\put(50,50){\line(1,0){5}}
\put(55,25){\line(0,1){25}}
\put(50,25){\line(0,1){25}}

\put(100,50){\line(1,0){5}}
\put(105,25){\line(0,1){25}}
\put(100,25){\line(0,1){25}}

\put(155,50){\line(1,0){15}}
\put(170,25){\line(0,1){25}}
\put(155,25){\line(0,1){25}}

\put(10,50){\makebox(0,0){$\small{N}$}}
\put(37,13){\makebox(0,0)[b]{$\tiny {I_1} (i_{1,j})$}}
\put(60,13){\makebox(0,0)[b]{$\small{I_2} (i_{2,j})$}}
\put(105,13){\makebox(0,0)[b]{$\tiny {I_n} (i_{n,j})$}}
\put(280,13){\makebox(0,0)[b]{$\small{1}$}}
\put(295,60){\makebox(0,0){$\small{x_j}$}}
\put(170,13){\makebox(0,0)[b]{$\tiny \frac{j}{N}$}}
\put(155,13){\makebox(0,0)[b]{$\tiny \frac{j-1}{N}$}}
\put(130,13){\makebox(0,0)[b]{$\tiny \frac{n}{N}$}}
\end{picture}

\vspace{0.2in}

We have created a sequence $(x_j)_{j=1}^N\subseteq L_1[0,1]$, and we now prove that it satisfies the following theorem.

\begin{thm}\label{T:L1}
Let $n\in\N$ and $1>\vp>0$ with $\vp^{-1}\in\N$.  Consider $N=n+(\vp^{-1}n)^n$ and $X^N\subseteq L_1[0,1]$ with basis $(x_j)_{j=1}^N$ as defined above.  Then the following holds.

\begin{enumerate}
    \item There is a linear map $T_N:L_1[0,1]\rightarrow L_1[0,1]$ with $T_N(\mathbbm{1}_{[\nicefrac{(j-1)}{N},\nicefrac{j}{N})})=x_j$ for all $1\leq j\leq N$ such that $\|I_{L_1[0,1]}-T_N\|\leq \vp$, where $I_{L_1[0,1]}$ is the identity operator.
    \item The basis $(x_j)_{j=1}^N$ of $X^N$ satisfies that
    $$(1-\vp)\sum_{j=1}^N|a_j|\leq \Big\|\sum_{j=1}^N a_j x_j\Big\|_{L_1}\leq (1+\vp) \sum_{j=1}^N |a_j|\quad\textrm{ for all }(a_j)_{j=1}^N\in\ell_1^N,
    $$
    \item For all $x\in X^N$, $\|x\|_{L_\infty[0,1]}\leq (1-\vp)^{-1} N \|x\|_{L_1[0,1]}$.
    \item Let $M\in\N$ so that $(t_j)_{j=1}^M\subseteq [0,1]$ and  $\sum |x(t_j)|\neq 0$ for all $x\in X^N\setminus\{0\}$.
        Then there exists $x\in X^N$ with
        $$\frac{nN}{(1+\vp)M} \|x\|_{L_1} \leq \frac{1}{M} \sum_{j=1}^M |x(t_j)| $$
\end{enumerate}
In particular, it follows from (4) that the $L_1$ norm on $X^N$ cannot be discretized using $M$ on the order of $N$ sampling points.  Furthermore, the $L_1$ norm on $X^N$ cannot be discretized using $M=o(N\log(N)/(\log\log(N))$ sampling points.  However,  $M$ can be chosen on the order of $N\log(N)/(\log\log(N))$ so that there exists sampling points $(t_j)_{j=1}^M\subseteq [0,1]$ such that $\|x\|_{L_1} = \frac{1}{M} \sum_{j=1}^M |x(t_j)|$ for all $x\in X^N$.
\end{thm}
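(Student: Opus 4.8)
The argument turns on one structural fact that I would establish first. Slice $[0,1)$ along every endpoint occurring in the construction; the resulting cells are the large cells $J_j:=[(j-1)/N,\,j/N)$ for $n<j\le N$ together with the small cells $I_k(i)$ for $1\le k\le n$ and $1\le i\le\vp^{-1}n$. For $x=\sum_{j=1}^N a_jx_j\in X^N$ one checks directly that $x$ is constant on each cell: $x\equiv Na_j$ on $J_j$, and $x\equiv N\big(a_k+\sum_{n<j\le N,\; i_{k,j}=i}a_j\big)$ on $I_k(i)$. Hence $\|x\|_{L_1}=\sum_C|C|\,v_C$, where $C$ ranges over the cells and $v_C$ is the constant value of $|x|$ on $C$. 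This identity is the engine for parts (2), (3) and the final sharpness claim.

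For (1), take $T_N=I+S_N$ with $S_N f:=\sum_{n<j\le N}\big(N\!\int_{J_j}f\,dt\big)\sum_{k=1}^n\mathbbm{1}_{I_k(i_{k,j})}$; disjointness of the $J_j$ and the identity $\big\|\sum_{k=1}^n\mathbbm{1}_{I_k(i_{k,j})}\big\|_{L_1}=\vp/N$ give $\|S_N\|\le\vp$, and $T_N$ sends $N\mathbbm{1}_{J_j}$ to $x_j$, hence carries $\spn(\mathbbm{1}_{J_j})_{j=1}^N$ onto $X^N$. For (2), split $\sum a_jx_j=u+v$ with $u:=\sum_{j=1}^N Na_j\mathbbm{1}_{J_j}$ disjointly supported (so $\|u\|_{L_1}=\sum|a_j|$) and $v:=\sum_{n<j\le N}Na_j\sum_{k=1}^n\mathbbm{1}_{I_k(i_{k,j})}$ supported in $[0,n/N)$ with $\|v\|_{L_1}\le\vp\sum_{n<j\le N}|a_j|$, then apply the triangle inequality. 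For (3), every value $v_C$ equals $N$ times a sum of absolute values of pairwise distinct coordinates $a_j$, hence $v_C\le N\|(a_j)\|_{\ell_1}$, and (2) bounds $\|(a_j)\|_{\ell_1}$ by $(1-\vp)^{-1}\|x\|_{L_1}$.

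Part (4) is the crux. The hypothesis that no nonzero $x\in X^N$ vanishes at all the $t_m$ forces, for each $1\le k\le n$, at least one sample point $\tau_k\in J_k$ — otherwise $x_k=N\mathbbm{1}_{J_k}$ would vanish at every $t_m$. Each $\tau_k$ lies in exactly one small cell $I_k(i_k)$, and since $\big((i_{k,j})_{k=1}^n\big)_{j=n+1}^N$ enumerates $\{1,\dots,\vp^{-1}n\}^n$ there is an index $j^\ast$ with $i_{k,j^\ast}=i_k$ for all $k\le n$. Then $x_{j^\ast}=N\mathbbm{1}_{J_{j^\ast}}+N\sum_{k=1}^n\mathbbm{1}_{I_k(i_k)}$ takes the value $N$ at each of the $n$ distinct points $\tau_1,\dots,\tau_n$, so $\sum_{m=1}^M|x_{j^\ast}(t_m)|\ge nN$; since $\|x_{j^\ast}\|_{L_1}=1+\vp$, this is exactly the inequality of (4) with $x=x_{j^\ast}$.

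The consequences then follow formally. If $A\|x\|_{L_1}\le\frac1M\sum_m|x(t_m)|\le B\|x\|_{L_1}$ on $X^N$, part (4) forces $M\ge nN/((1+\vp)B)$; for fixed $\vp$ one has $\log N\sim n\log n$ as $n\to\infty$, hence $n\sim\log N/\log\log N$ and $nN\asymp N\log N/\log\log N$, so $M$ is neither $O(N)$ nor $o(N\log N/\log\log N)$. For the matching upper bound, since $\vp^{-1}\in\N$ the value $M:=\vp^{-1}nN$ makes $M|C|$ a positive integer for every cell (namely $\vp^{-1}n$ for each $J_j$ and $1$ for each $I_k(i)$); placing $M|C|$ sample points in each cell $C$ and invoking the identity of the first paragraph yields $\frac1M\sum_{m=1}^M|x(t_m)|=\sum_C|C|\,v_C=\|x\|_{L_1}$ for all $x\in X^N$, and $M=\vp^{-1}nN$ is of order $N\log N/\log\log N$. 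The one step needing an idea is the opening of (4): realizing that the separation hypothesis pins a sample point inside each $J_k$ and that the enumeration then produces a single basis vector $x_{j^\ast}$ large at all of them simultaneously; the rest is bookkeeping together with the routine estimate $nN\asymp N\log N/\log\log N$.
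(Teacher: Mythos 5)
Your proposal is correct and follows essentially the same route as the paper: the explicit operator $T_N=I+S_N$ you write down is in fact the same operator as the paper's $(I_{L_1}-P_N)+S_NP_N$, the lower bound in (2) is the same perturbation estimate, and (4) uses the identical key idea of pinning a sample point in each interval $[\nicefrac{(k-1)}{N},\nicefrac{k}{N})$, $k\le n$, and using the enumeration of $\{1,\dots,\vp^{-1}n\}^n$ to find one basis vector $x_{j^\ast}$ of norm $1+\vp$ that is large at all $n$ of them, with the same asymptotic comparison of $nN$ to $N\log N/\log\log N$ and the same choice $M=\vp^{-1}nN$ for the exact discretization. The only nitpick is wording: in (3) the cell value $v_C$ is at most (not equal to) $N$ times the sum of the relevant $|a_j|$, which is all the argument needs.
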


\begin{proof}
Consider the linear map $P_N:L_1[0,1]\rightarrow L_1[0,1]$  given by $P_N=\sum_{j=1}^N \E_{[\nicefrac{(j-1)}{N},\nicefrac{j}{N})}$ where $\E_{[\nicefrac{(j-1)}{N},\nicefrac{j}{N})}$ is conditional expectation on $[\nicefrac{(j-1)}{N},\nicefrac{j}{N})$.  Note that $P_N$ is a projection onto the subspace $\spn (\mathbbm{1}_{[\nicefrac{(j-1)}{N},\nicefrac{j}{N})})_{j=1}^N$ with $\|P_N\|=1$.  We define the operator $S_N:\spn (\mathbbm{1}_{[\nicefrac{(j-1)}{N},\nicefrac{j}{N})})_{j=1}^N\rightarrow L_1[0,1]$ by $S_N(\sum_{j=1}^N a_j N \mathbbm{1}_{[\nicefrac{(j-1)}{N},\nicefrac{j}{N})})=\sum_{j=1}^N a_j x_j$. We now let $T_N=(I_{L_1[0,1]}-P_N)+S_N P_N$.  We have the following estimate.
\begin{align*}
    \|I_{L_1[0,1]}-T_N\|&=\|S_N P_N-P_N\|\\
    &=\sup_{\sum a_j=1}\Big\|\sum_{j=1}^N a_j x_j-\sum_{j=1}^N a_j N\mathbbm{1}_{[\nicefrac{(j-1)}{N},\nicefrac{j}{N})}\Big\|_{L_1}\\
    &\leq\sup_{\sum a_j=1}\sum_{j=n+1}^N \big\|a_j x_j-a_j N \mathbbm{1}_{[\nicefrac{(j-1)}{N},\nicefrac{j}{N})}\big\|_{L_1}\\
&=\sup_{\sum a_j=1}\sum_{j=n+1}^N a_j N\Big\|\sum_{k=1}^n \mathbbm{1}_{I_k(i_{k,j})}\Big\|_{L_1}\\
&=\vp \hspace{1cm}\textrm{ \small (because $I_k(i_{k,j})$ has length $\vp n^{-1}N^{-1}$ for all $1\leq k\leq n$).}\\
\end{align*}
This shows that $\|I_{L_1[0,1]}-T_N\|\leq \vp$ which proves (1).  We have that $\|x_j\|_{L_1}\leq 1+\vp$ for all $1\leq j\leq N$ and hence $\|\sum_{j=1}^N a_j x_j\|_{L_1}\leq (1+\vp)\sum_{j=1}^N |a_j|$ for all $(a_j)_{j=1}^N\in\ell_1^N$.  We now consider a fixed $(a_j)_{j=1}^N\in\ell_1^N$ with $\sum_{j=1}^N|a_j|=1$.
\begin{align*}
    \Big\|\sum_{j=1}^Na_j x_j\Big\|_{L_1}&=\Big\|\sum_{j=1}^N a_j N\mathbbm{1}_{[\nicefrac{(j-1)}{N},\nicefrac{j}{N})}-a_j(N\mathbbm{1}_{[\nicefrac{(j-1)}{N},\nicefrac{j}{N})}-x_j)\Big\|_{L_1}\\
    &\geq\sum_{j=1}^N |a_j| -\sum_{j=1}^N|a_j| \big\|(I_{L_1[0,1]}-T_N)N\mathbbm{1}_{[\nicefrac{(j-1)}{N},\nicefrac{j}{N})}\big\|_{L_1}\\
    &\geq \sum_{j=1}^N |a_j| -\|I_{L_1[0,1]}-T_N\|\sum_{j=1}^N|a_j|\\
    &=1-\vp
\end{align*}
Thus, we have proven (2) by showing that $$(1-\vp)\sum_{j=1}^N|a_j|\leq \Big\|\sum_{j=1}^N a_j x_j\Big\|_{L_1}\leq (1+\vp) \sum_{j=1}^N |a_j| \,\,\,\text{for all}\,\,\, (a_j)_{j=1}^N\in\ell_1^N.$$

Note that $\|x_j\|_{L_\infty}=N$ for all $1\leq j\leq N$.  Thus for all $(a_j)_{j=1}^N\in\ell_1^N$ we have by (2) that
$$\Big\|\sum_{j=1}^N a_j x_j\Big\|_{L_\infty}\leq \sum_{j=1}^N|a_j|N\leq(1-\vp)^{-1} N \Big\|\sum_{j=1}^N a_j x_j\Big\|_{L_1}.
$$
Thus we have proven (3).

We now let $M\in\N$ and $(t_k)_{k=1}^M\subseteq[0,1]$ with $\sum |x(t_k)|\neq 0$ for all $x\in X^N$.  Hence, after reordering $(t_k)_{k=1}^M$ we may assume without loss of generality that $|x_k(t_k)|=1$ for all $1\leq k\leq n$.  There exists unique $n< j\leq N$ such that $t_k\in I_k(i_{j,k})$ for all $1\leq k\leq n$.  Thus, we have that
$$
    \sum_{k=1}^M |x_j(t_k)|\geq \sum_{k=1}^n N \mathbbm{1}_{I_k(i_{k,j})}(t_k)
    = nN.
$$
As $\|x_j\|_{L_1}\leq 1+\vp$, this proves (4).  To prove that the $L_1$ norm on $X^N$ cannot be discretized using $M=o(N\log(N)/(\log\log(N))$ sampling points it follows from (4) that we need to prove that there exists $C>0$ and $n_0\in\N$ such that $N\log(N)/(\log\log(N)\leq CNn$ for all $n\geq n_0$.  We have that $N=n+(\vp^{-1}n)^n$.  Thus if $n\in\N$ is large enough then $N\leq n^{2n}$ and hence $\log(N)\leq 2n\log(n)$. On the other hand, $N\geq (\vp^{-1}n)^n$ and hence $\log(N)\geq n$ which implies that $\log\log(N)\geq \log(n)$.  Thus we have if $n\in\N$ is large enough then
$$2Nn\geq \dfrac{N \log(N)}{\log(n)}\geq \dfrac{N\log(N)}{\log\log(N)}.
$$

We now prove that the $L_1$ norm on $X^N$ can be perfectly discretized using $M$ on the order of $N\log(N)/(\log\log(N)$ sampling points.  For $n\in\N$ we let $M=\vp^{-1}nN$ and $t_j=\frac{j-1}{M}$.  As each $x\in X^N$ is constant on the interval $[\frac{j-1}{M}, \frac{j}{M})$ for all $1\leq j\leq M$, we have that $\|x\|_{L_1} = \frac{1}{M} \sum_{j=1}^M |x(t_j)|$ for all $x\in X^N$.  Thus we just need to prove that there exists $C>0$ and $n_0\in\N$ such that $CN\log(N)/\log\log(N)\geq Nn$ for all $n\geq n_0$.  

As $N=n+(\vp^{-1}n)^n$ we have that $N\geq n^n$ and hence $\log(N)\geq n\log(n)$.  On the other hand, if $n\in\N$ is large enough then $\log(N)\leq n^{2}$ and hence $\log\log(N)\leq 2\log(n)$.  Thus we have if $n\in\N$ is large enough then
$$Nn\leq \dfrac{N \log(N)}{\log(n)}\leq \dfrac{2N\log(N)}{\log\log(N)}.
$$
\end{proof}

\section{Constructing subspaces of $L_p$ for $1\leq p<2$ }\label{S:Lp}

In section \ref{S:L1} we constructed subspaces $X^N \subseteq L_1[0,1]$ which satisfied the Nikol’skii-type inequality $\|x\|_{L_\infty}\leq (1+\vp)\|x\|_{L_1}$ for all $x\in X^N$ and yet the $L_1$ norm on $X^N$ cannot be discretized using $M=o(N\log(N)/\log\log(N))$ sampling points.  That construction only works for $L_1$, but we now introduce a different method which works for $L_p$ when $1\leq p<2$.   

For $n\in\N$, the $n$th Rademacher function $R_n$ on $[0,1]$ is given by
$$R_n=\sum_{j=1}^{2^n} (-1)^j \mathbbm{1}_{[\nicefrac{(j-1)}{2^n},\nicefrac{j}{2^n})}.$$  Note that $(R_n)_{n=1}^\infty$ is an independent sequence of mean-zero $\pm1$ random variables on $[0,1]$.  This sequence is very useful when using probabilistic techniques in the geometry of Banach spaces, and we will rely on the following theorem.  

\begin{thm}[Khintchine's Inequality] For all $1\leq p<\infty$ 
there exists constants $0<A_p\leq B_p$ such that if $(R_j)_{j=1}^N$ is a sequence of Rademacher functions on $[0,1]$ then for all $N\in\N$ and all scalars $(a_j)_{j=1}^N$, we have that
$$
A_p \Big(\sum_{j=1}^N |a_j|^2\Big)^{1/2}\leq \Big(\int \big|\sum_{j=1}^N a_j R_j(s)\big|^p  ds \Big)^{1/p}\leq B_p \Big(\sum_{j=1}^N |a_j|^2\Big)^{1/2}
$$
\end{thm}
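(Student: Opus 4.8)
The plan is to prove Khintchine's inequality by reducing to two extreme cases and handling each with standard probabilistic tools. By homogeneity I may assume $\sum_{j=1}^N |a_j|^2 = 1$, and write $S = \sum_{j=1}^N a_j R_j$. Since $[0,1]$ is a probability space, $q \mapsto \|S\|_q$ is nondecreasing, so the right-hand inequality holds automatically with $B_p = 1$ whenever $1 \le p \le 2$, and the left-hand inequality holds automatically with $A_p = 1$ whenever $p \ge 2$. Thus it suffices to (i) establish an upper bound $\|S\|_p \le B_p$ for $p > 2$, and (ii) establish a lower bound $\|S\|_p \ge A_p$ for $1 \le p < 2$.

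For the upper bound I would exploit independence together with the mean-zero $\pm 1$ nature of the $R_j$. Because the $R_j$ are independent, for any $\lambda \in \R$,
\[
\E e^{\lambda S} = \prod_{j=1}^N \E e^{\lambda a_j R_j} = \prod_{j=1}^N \cosh(\lambda a_j) \le \prod_{j=1}^N e^{\lambda^2 a_j^2/2} = e^{\lambda^2/2},
\]
using $\cosh x \le e^{x^2/2}$ and $\sum a_j^2 = 1$. A Chernoff bound then gives $\mu(\{|S| > t\}) \le 2 e^{-t^2/2}$, and integrating $\E|S|^p = \int_0^\infty p t^{p-1}\mu(\{|S| > t\})\, dt$ yields a finite constant $B_p$ (of the form $(Cp)^{p/2}$) valid for every $p$; in particular one obtains a concrete value $B_4$. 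Equivalently, one can expand $\E S^{2k}$ by the multinomial theorem, observe that every monomial containing an odd power of some $R_j$ integrates to zero, and bound the surviving terms by $(2k-1)!!$, which is exactly the $2k$-th moment of a standard Gaussian.

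For the lower bound when $1 \le p < 2$ — the only genuinely nontrivial step, since a priori $\|S\|_p$ could be small even though $\|S\|_2 = 1$ — I would close the gap by interpolation. Because $p < 2 < 4$, choose $\lambda \in (0,1)$ with $\tfrac12 = \tfrac{1-\lambda}{p} + \tfrac{\lambda}{4}$, so that log-convexity of $q \mapsto \log\|S\|_q$ (i.e.\ H\"older's inequality) gives $\|S\|_2 \le \|S\|_p^{1-\lambda}\,\|S\|_4^{\lambda}$. Feeding in the upper estimate $\|S\|_4 \le B_4 \|S\|_2 = B_4$ from the previous step together with $\|S\|_2 = 1$ yields $1 \le \|S\|_p^{1-\lambda} B_4^{\lambda}$, hence $\|S\|_p \ge B_4^{-\lambda/(1-\lambda)} =: A_p$. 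Alternatively one can run a Paley--Zygmund argument: from $\E S^2 = 1$ and $\E S^4 \le 3$ one gets $\mu(\{S^2 \ge \tfrac12\}) \ge \tfrac{1}{12}$, hence $\E|S|^p \ge 2^{-p/2}/12$. Undoing the normalization produces the stated inequality with constants depending only on $p$; the main obstacle is exactly this lower bound for $p < 2$, and it is resolved by pushing the easy upper moment estimate at $q = 4$ back through H\"older's inequality.
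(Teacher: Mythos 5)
Your proof is correct. Note that the paper itself gives no argument for this statement: Khintchine's inequality is stated as a classical tool (it is used in Sections \ref{S:Lp} and elsewhere) and its proof is standard background, so there is no ``paper proof'' to compare against. Your route is the standard one and all the steps check out: after normalizing $\sum |a_j|^2=1$, monotonicity of $q\mapsto\|S\|_q$ on a probability space disposes of the easy half in each regime (using that the $R_j$ are orthonormal in $L_2$, so $\|S\|_2=1$); the subgaussian bound $\E e^{\lambda S}\le e^{\lambda^2/2}$ with a Chernoff tail estimate (or the even-moment expansion against Gaussian moments) gives $B_p$ for $p>2$, with the concrete value $\E S^4\le 3$; and the lower bound for $1\le p<2$ follows either from log-convexity of $\|S\|_q$ in $1/q$ applied at $p<2<4$ together with $\|S\|_4\le B_4$, or from Paley--Zygmund. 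One small point worth a sentence in a written-up version: the statement allows arbitrary (possibly complex) scalars, while your exponential-moment step is phrased for real $a_j$; the complex case follows by applying the real case to real and imaginary parts separately, at the cost of an absolute constant, so this is a remark rather than a gap.
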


 We now use the Rademacher functions and Khintchine's inequality to build a class of subspaces of $L_p[0,1]$ for $1\leq p<2$ where the $L_p$ norm cannot be discretized using $M$ on the order of the dimension number of sampling points.

\begin{prop}\label{P:bad}
Let $1\leq p<2$ and $N\in\N$. For each $1\leq j\leq N$ let $y_j(t)=N^{1/p-1/2} R_j(N^{1-p/2 }t)$ and $X^N=\spn(y_j)_{j=1}^N$.  Then,
\begin{enumerate}
    \item $(y_j)_{j=1}^N$ is $1$-equivalent to the Rademacher sequence $(R_j)_{j=1}^N$ in $L_p$.
    \item $\|x\|_{L_\infty}\leq A^{-1}_p N^{1/p}\|x\|_{L_p}$ for all $x\in X^N$ where $A_p$ is the constant in Khintchine's inequality.
    \item Suppose that $(t_j)_{j=1}^M\subseteq[0,1]$ are such that $\sum_{j=1}^M |x(t_j)|^p>0$ for all $x\in X^N\setminus\{0\}$. Then $$ \frac{N^{2-p/2}}{M}\|y_1\|_{L_p}^{p}\leq \frac{1}{M}\sum_{j=1}^M |y_1(t_j)|^p.$$ 
\end{enumerate}
Thus, the $L_p$-norm on $X^N$ cannot be discretized using $M$ on the order of $N$ sampling points.
\end{prop}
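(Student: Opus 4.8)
The plan is to verify the three numbered items in turn and then read the concluding sentence off item (3). First observe that since $p<2$, each $y_j$ is supported in the short interval $[0,N^{p/2-1})$ (interpreting $R_j$ as $0$ outside $[0,1]$), because $y_j(t)\neq0$ forces $N^{1-p/2}t\in[0,1)$. For item (1), perform the change of variables $s=N^{1-p/2}t$, which carries $[0,N^{p/2-1})$ onto $[0,1)$ with Jacobian $N^{p/2-1}$; this exactly cancels the amplitude factor $(N^{1/p-1/2})^p=N^{1-p/2}$, so $\|\sum_{j=1}^N a_jy_j\|_{L_p}^p=\int_0^{N^{p/2-1}}N^{1-p/2}|\sum_j a_jR_j(N^{1-p/2}t)|^p\,dt=\int_0^1|\sum_j a_jR_j(s)|^p\,ds=\|\sum_{j=1}^N a_jR_j\|_{L_p}^p$, which is item (1). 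In particular $\|y_1\|_{L_p}=\|R_1\|_{L_p}=1$, and since the Rademacher system is linearly independent, $\dim X^N=N$. For item (2), if $x=\sum_j a_jy_j$ then the triangle inequality gives $\|x\|_{L_\infty}\le N^{1/p-1/2}\sum_j|a_j|\le N^{1/p-1/2}N^{1/2}(\sum_j|a_j|^2)^{1/2}$ by Cauchy--Schwarz, while item (1) together with the lower bound in Khintchine's inequality gives $(\sum_j|a_j|^2)^{1/2}\le A_p^{-1}\|x\|_{L_p}$; chaining these yields $\|x\|_{L_\infty}\le A_p^{-1}N^{1/p}\|x\|_{L_p}$.

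The crux is item (3), and it exploits that every $x\in X^N$ vanishes off $[0,N^{p/2-1})$. Set $S=\{j\le M:t_j\in[0,N^{p/2-1})\}$. For $j\notin S$ one has $x(t_j)=0$ for all $x\in X^N$, so the hypothesis that $\sum_{j=1}^M|x(t_j)|^p>0$ for every $x\in X^N\setminus\{0\}$ is equivalent to injectivity of the map $x\mapsto(x(t_j))_{j\in S}$ on $X^N$, which forces $|S|\ge\dim X^N=N$. On the other hand, for $j\in S$ we have $N^{1-p/2}t_j\in[0,1)$, so $|R_1(N^{1-p/2}t_j)|=1$ and hence $|y_1(t_j)|^p=(N^{1/p-1/2})^p=N^{1-p/2}$. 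Consequently $\sum_{j=1}^M|y_1(t_j)|^p\ge|S|\,N^{1-p/2}\ge N^{2-p/2}=N^{2-p/2}\|y_1\|_{L_p}^p$, and dividing by $M$ gives item (3).

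To obtain the final sentence, suppose the $L_p$ norm on $X^N$ were discretized as in \eqref{E:discI} with constants $A<1<B$ and $M$ sampling points $(t_j)_{j=1}^M$. The lower bound in \eqref{E:discI} supplies the hypothesis of item (3), so applying item (3) and then the upper bound in \eqref{E:discI} to $x=y_1$ gives $\tfrac{N^{2-p/2}}{M}\le\tfrac1M\sum_{j=1}^M|y_1(t_j)|^p\le B\|y_1\|_{L_p}^p=B$, i.e. $M\ge B^{-1}N^{2-p/2}$. Since $2-p/2>1$ when $p<2$, this lower bound on $M$ is superlinear in $N$, so no $M$ of order $N$ can work. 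I expect the only delicate point to be the injectivity step in item (3): one must see that sample points outside the common support $[0,N^{p/2-1})$ are worthless and that the surviving evaluation map is injective precisely because $\dim X^N=N$; everything else is routine bookkeeping with the scaling and Khintchine's inequality.
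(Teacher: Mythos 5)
Your proposal is correct and follows essentially the same route as the paper: change of variables for (1), triangle inequality plus Cauchy--Schwarz plus the Khintchine lower bound for (2), and for (3) the observation that $X^N$ is supported in $[0,N^{p/2-1})$ so at least $N$ of the sampling points must land there (you spell out via injectivity of the evaluation map what the paper asserts directly from $\dim X^N=N$), each contributing $|y_1(t_j)|^p=N^{1-p/2}$. Your closing deduction that \eqref{E:discI} would force $M\geq B^{-1}N^{2-p/2}$, which is superlinear for $p<2$, is exactly the intended reading of the final sentence.
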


\begin{proof}
Let $(a_j)_{j=1}^N\in\ell_2^N$.  We have that
\begin{align*}
    \Big\|\sum_{j=1}^N a_j y_j\Big\|^p_{L_p}&=\int \big|\sum_{j=1}^N a_j N^{1/p-1/2} R_j(N^{1-p/2} t)\big|^p dt\\
    &=\int \big|\sum_{j=1}^N a_j R_j(s)\big|^p ds \quad\textrm{ \small by substituting }s=N^{1-p/2}t.
\end{align*}
Thus, $(y_j)_{j=1}^N$ is $1$-equivalent to the Rademacher sequence $(R_j)_{j=1}^N$ in $L_p[0,1]$. By Khintchine's inequality we have that
\begin{equation}\label{E:KI}
A_p \Big(\sum_{j=1}^N |a_j|^2\Big)^{1/2}  \leq  \Big\|\sum_{j=1}^N a_j y_j\Big\|_{L_p}\leq B_p \Big(\sum_{j=1}^N |a_j|^2\Big)^{1/2}.
\end{equation}
 Let $x=\sum_{j=1}^N a_j y_j\in X^N$.  We now give an upper bound for $\|x\|_{L_\infty}$. 
\begin{align*}
    \|x\|_{L_\infty}&=\sup_{t\in[0,1]}\Big|\sum_{j=1}^N a_j N^{1/p-1/2} R_j(N^{1-p/2}t)\Big|\\
    &=N^{1/p}\sum_{j=1}^N N^{-1/2}|a_j|\\
    &\leq N^{1/p}\Big(\sum_{j=1}^N|a_j|^2\Big)^{1/2}\quad\textrm{ \small by Cauchy-Schwartz,}\\
    &\leq A_p^{-1} N^{1/p}\|x\|_{L_p}\quad \quad \quad \textrm{ by \eqref{E:KI}.}
\end{align*}
Thus we have proven (2).  We now suppose that $(t_j)_{j=1}^M\subseteq[0,1]$ are such that $\sum_{j=1}^M |x(t_j)|^p>0$ for all $x\in X^N$.  As $X^N$ is $N$-dimensional and supported on $[0,N^{p/2-1}]$ we have that there exists a subset $(t_j)_{j\in J}\subseteq[0,N^{p/2-1}]$ with $|J|\geq N$.  Note that, $|y_1(t_j)|=N^{1/p-1/2}$ for all $j\in J$. We thus have that,
$$
    \sum_{j=1}^M |y_1(t_j)|^p \geq |J| N^{1-p/2}\geq N N^{1-p/2} = N^{2-p/2}
$$
As $\|y_1\|_{L_p}=1$ we have proven (3).
\end{proof}

In Proposition \ref{P:bad} we  constructed for all $1\leq p<2$ a collection of subspaces $(X^N)_{N=1}^\infty$ of $L_p[0,1]$ where $X^N$ is $(B_p/A_p)$-isomorphic to $\ell_2^N$ and satisfies $\|x\|_{L_\infty}\leq A_p^{-1}\|x\|_{L_p}$ for all $x\in X^N$, and yet the $L_p$-norm on $X^N$ could not be discretized using $M$ on the order of $N$ sampling points.  Our proof only worked for $1\leq p<2$, but it is natural to ask if the result was still true for $2<p<\infty$.  We now show that this hypothesis fails for all $2<p$.  That is, there does not exist uniform constants $C_p,D_p>0$ such that for all $N\in\N$, there is a probability space $(M,\Sigma,\mu)$ such that $\ell_2^N$ is $C_p$-isomorphic to a subspace $X^N\subseteq L_p(\mu)$ with $\|x\|_{L_\infty(\mu)}\leq D_pN^{1/p}\|x\|_{L_\infty(\mu)}$ for all $x\in X^N$.  The proof follows the classical argument in \cite{FLP77} which proves that for all $2<p$ there does not exist a constant $C>0$ so that $\ell_2^n$ uniformly embeds into $\ell_p^{Cn}$.

\begin{prop}\label{P:p>2}
Let $2<p<\infty$ and $N\in\N$.   Let $X^N \subseteq L_p(\Omega)$ where $\Omega$ is a probability space and $X^N$ has a basis $(x_j)_{j=1}^N$ such that $\|x_j\|_{L_p(\Omega)}=1$ and the following holds for some $A,B,\beta>0$:
\begin{enumerate}
    \item  For all $(a_j)_{j=1}^N\in\ell_2^N$,
    $$A \, \Big(\sum_{j=1}^N |a_j|^2\Big)^{1/2} \leq \Big\|\sum_{j=1}^N a_j x_j\Big\|_{L_p(\Omega)}\leq B\Big(\sum_{j=1}^N |a_j|^2\Big)^{1/2} $$
    \item For all $x \in X^N$,
    $$ \|x\|_{L_{\infty}(\Omega)} \leq  \beta N^{1/p} \|x\|_{L_p{(\Omega)}}$$
\end{enumerate}
Then, $\frac{A}{B \, B_p} N^{1/2-1/p} \leq \beta$ where $B_p$ is the constant from Khintchine's inequality.  In particular, there does not exist constants $A,B,\beta>0$ such that for all $N\in\N$ there exists a subspace $X^N\subseteq L_p(\Omega)$ such that $X^N$ is $A^{-1}B$-isomorphic to $\ell_2^N$ and $X^N\subseteq L_p(\Omega)$ satisfies the boundedness condition $ \|x\|_{L_{\infty}(\Omega)} \leq  \beta N^{1/p} \|x\|_{L_p{(\Omega)}}$ for all $x\in X^N$.
\end{prop}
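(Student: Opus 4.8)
The plan is to follow the Rademacher-averaging argument of \cite{FLP77}, as indicated in the paragraph preceding the statement. Let $(\vp_j)_{j=1}^N$ be independent random signs and estimate $\E\big\|\sum_{j=1}^N\vp_j x_j\big\|_{L_p(\Omega)}^p$ (the expectation being over the signs) from below and from above. The lower bound is free: for every choice of signs, hypothesis (1) gives $\big\|\sum_{j=1}^N\vp_j x_j\big\|_{L_p}\ge A\big(\sum_{j=1}^N 1\big)^{1/2}=AN^{1/2}$, hence $\E\big\|\sum_{j=1}^N\vp_j x_j\big\|_{L_p}^p\ge A^pN^{p/2}$. The whole content of the proof lies in producing a matching upper bound of the form $(\text{const}(A,B,\beta,B_p,p))\cdot N$, and this is exactly where the Nikol'skii hypothesis (2) is used.

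Set $S(\omega):=\sum_{j=1}^N|x_j(\omega)|^2$. By Fubini--Tonelli and Khintchine's inequality,
\[
\E\Big\|\sum_{j=1}^N\vp_j x_j\Big\|_{L_p(\Omega)}^p=\int_\Omega\E\Big|\sum_{j=1}^N\vp_j x_j(\omega)\Big|^p d\mu(\omega)\le B_p^p\int_\Omega S(\omega)^{p/2}\,d\mu(\omega),
\]
so everything reduces to an a priori pointwise bound on the square function $S$. Here is the key step. For $\mu$-a.e.\ $\omega$ the point-evaluation functional $\delta_\omega(x)=x(\omega)$ on $X^N$ is well defined and bounded, and by (2) together with the upper estimate in (1) we have $|x(\omega)|\le\|x\|_{L_\infty}\le\beta N^{1/p}\|x\|_{L_p}\le\beta BN^{1/p}\big(\sum_{j=1}^N|a_j|^2\big)^{1/2}$ for every $x=\sum_{j=1}^N a_jx_j\in X^N$ (in the complex case read $\overline{a_j}$ where needed). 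Apply this with the special choice $a_j=x_j(\omega)$, so that $x(\omega)=\sum_{j=1}^N|x_j(\omega)|^2=S(\omega)$ and $\sum_{j=1}^N|a_j|^2=S(\omega)$; it yields $S(\omega)\le\beta BN^{1/p}S(\omega)^{1/2}$, and hence $S(\omega)\le\beta^2B^2N^{2/p}$ for $\mu$-a.e.\ $\omega$. Since $\mu(\Omega)=1$, this gives $\int_\Omega S(\omega)^{p/2}\,d\mu\le\big(\beta^2B^2N^{2/p}\big)^{p/2}=\beta^pB^pN$, and therefore $\E\big\|\sum_{j=1}^N\vp_j x_j\big\|_{L_p}^p\le B_p^p\beta^pB^pN$.

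Combining the lower and upper bounds, $A^pN^{p/2}\le B_p^pB^p\beta^pN$, and taking $p$-th roots gives precisely $\tfrac{A}{BB_p}N^{1/2-1/p}\le\beta$. The ``in particular'' statement then follows at once: if absolute constants $A,B,\beta>0$ existed for which such subspaces $X^N$ could be found for all $N\in\N$, the inequality just proved would force $\beta\ge\tfrac{A}{BB_p}N^{1/2-1/p}\to\infty$ as $N\to\infty$, since $1/2-1/p>0$ for $p>2$ --- a contradiction. I expect the pointwise bound on $S(\omega)$ to be the only step requiring any thought: the point is that feeding the single test vector $\sum_{j=1}^N x_j(\omega)x_j$ into the Nikol'skii inequality self-improves the naive termwise bound $S(\omega)\le N\cdot\beta^2N^{2/p}=\beta^2N^{1+2/p}$ to $S(\omega)\le\beta^2B^2N^{2/p}$, and this factor-$N$ gain is exactly what produces the exponent $N^{1/2-1/p}$ rather than something weaker. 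The remaining ingredients --- Fubini, Khintchine, and the bookkeeping --- are routine.
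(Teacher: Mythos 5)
Your proof is correct and follows essentially the same argument as the paper: the pointwise square-function bound $S(\omega)^{1/2}\leq \beta B N^{1/p}$ obtained by testing the Nikol'skii inequality on $\sum_j \overline{x_j(\omega)}\,x_j$ is exactly the paper's inequality \eqref{E:H2} (stated there in normalized form), and the Rademacher/Khintchine averaging with Fubini is identical, only written with expectations over random signs instead of integrals of Rademacher functions on $[0,1]$.
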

 
\vspace{0.2in}
\begin{proof}
Let $t \in \Omega$. By (2) we have without loss of generality that
\begin{equation} \label{E:H1}
|x(t)| \leq \beta N ^{1/p}\|x\|_{L_p(\Omega)}\hspace{1cm}\textrm{ for all }x\in X^N.
\end{equation}
 By scaling (1), we have the following inequality
\[A \leq \Bigg\|\sum_{j=1}^N \dfrac{x_j(t)}{\left( \sum_{j=1}^N |x_j(t)|^2\right)^{1/2}} \, x_j\Bigg\|_{L_p(\Omega)}\leq B\]
Now using inequality (\ref{E:H1}) for $x=\sum_{j=1}^N \frac{x_j(t)}{\left( \sum_{j=1}^N |x_j(t)|^2\right)^{1/2}}  x_j$ we get that
\[\sum_{j=1}^N \dfrac{|x_j(t)|^2}{\left( \sum_{j=1}^N |x_j(t)|^2\right)^{1/2}} \leq \beta N^{1/p} B\]
Therefore,
\begin{equation}\label{E:H2}
\left( \sum_{j=1}^N |x_j(t)|^2\right)^{1/2}  \leq \beta N^{1/p} B    
\end{equation}

Let $(R_j)_{j=1}^\infty$ be the sequence of Rademacher functions on $[0,1]$.  For $s \in [0,1]$ we have by (1) that
$$
  A^{p} N^{p/2} \leq \Big\| \sum_{j=1}^N R_{j}(s) x_j\Big\|_{L_p(\Omega)}^p = \int_{\Omega} \Big|\sum_{j=1}^N R_{j}(s) x_j(t)\Big|^p dt
$$

By integrating with respect to $s$ we get
\begin{align*}
A^{p} N^{p/2} & \leq \int_{0}^{1} \int_{\Omega} \Big|\sum_{j=1}^N R_{j}(s) x_j(t)\Big|^p dt \, 
ds\\
&=  \int_{\Omega}\int_{0}^{1} \Big|\sum_{j=1}^N R_{j}(s) x_j(t)\Big|^p ds \, 
dt\\
& \leq \int_{\Omega} \left( B_p \left( \sum_{j=1}^N |x_j(t)|^2\right)^{1/2}\right)^p dt \hspace{1cm}\text{\small (by  Khintchine's inequality)}\\
&\leq \int_{\Omega} \left( B_p \beta N^{1/p} B\right)^p dt\hspace{3cm}  \text{by \eqref{E:H2}}\\
&= \left( B_p \beta N^{1/p} B\right)^p
\end{align*}

Thus we have that $A N^{1/2} \leq B_p \beta N^{1/p} B$ and hence $\frac{A}{B \, B_p} N^{1/2-1/p} \leq \beta$.  As $2<p$, we have that $N^{1/2-1/p}$ is unbounded and hence a uniform constant $\beta$ cannot exist.
\end{proof}

\section{Discretization and frame theory}\label{S:phase}

In previous sections we have considered the problem of discretizing the $L_p$ norm on a subspace $X\subseteq L_p$.  We now show how this problem is naturally connected with frame theory.  
A family of vectors $(x_j)_{j \in J}$ in a Hilbert space $H$ is called a {\em frame} of $H$ if there are constants $0<A\leq B<\infty$ so that for all $x\in \HH^N$,
\begin{equation}\label{E:frame}
A \|x\|^2 \leq \sum_{j \in J} |\langle x, x_j\rangle|^2\leq B \|x\|^2.
\end{equation}
A frame is called {\em tight} if the optimal frame bounds satisfy $A=B$, and a frame is called {\em Parseval} if the optimal frame bounds satisfy $A=B=1$.  The {\em analysis operator} of a frame $(x_j)_{j \in J}$ of $H$ is the map $T:H\rightarrow \ell_2(J)$ given by $T(x)=(\langle x,x_j\rangle)_{j\in J}$.   Note that $(x_j)_{j \in J}$ has upper frame bound $B$ and lower frame bound $A$ if and only if the analysis operator is an embedding and satisfies $A\|x\|^2\leq \|Tx\|^2\leq B\|x\|^2$ for all $x\in H$.

The notion of a frame can be generalized to a continuous frame by changing the summation in \eqref{E:frame} to integration over a measure space. A  collection of vectors $(x_t)_{t\in \Omega}$ in a Hilbert space $H$ is called a {\em continuous frame} of $H$ over a measure space $(\Omega,\Sigma,\mu)$ if there are constants $0<A\leq B<\infty$ so that for all $x\in H$,
\begin{equation}\label{E:frame2}
A \|x\|^2 \leq \int_\Omega |\langle x, x_j\rangle|^2 d\mu\leq B \|x\|^2.
\end{equation}
A continuous frame is called {\em tight} if the optimal frame bounds satisfy $A=B$, and a continuous frame is called {\em Parseval} if the optimal frame bounds satisfy $A=B=1$. The {\em analysis operator} of a continuous frame $(x_t)_{t \in \Omega}$ of $H$ is the map $T:H\rightarrow L_2(\Omega)$ given by $T(x)=(\langle x,x_t\rangle)_{t\in \Omega}$.  Note that the analysis operator of a frame $(x_j)_{j\in J}$ is an embedding of $H$ into $\ell_2(J)$ and that the analysis operator of a continuous frame $(x_t)_{t\in \Omega}$ is an embedding of $H$ into $L_2(\Omega)$.

Continuous frames are widely used in mathematical physics and are particularly prominent in quantum mechanics and quantum optics. Though
 continuous frames such as the short time Fourier transform naturally  characterize many different physical properties,  discrete frames are much better suited for  computations.  Because of this, when working with continuous frames, researchers often create a discrete frame by sampling the continuous frame and then use the discrete frame for computations instead of the entire continuous frame.
 That is given, a continuous frame  $(x_t)_{t \in \Omega}$ of $H$ we are interested in choosing $(t_j)_{j\in J}\subseteq \Omega$ so that $(x_{t_j})_{j\in J}$ is a frame of $H$.
The notion of creating a frame by sampling a continuous frame has its origins in the very start of modern frame theory.
Indeed, Daubechies, Grossmann, and Meyer \cite{DGM86} popularized modern frame theory in their seminal paper ``Painless nonorthogonal expansions", and their constructions of frames for Hilbert spaces were all done by sampling different continuous frames. Sampling continuous frames continues to be an important subject in applied harmonic analysis, and there are many modern research papers on the subject in various contexts \cite{AHP19}\cite{DMM21}\cite{DKNRV21}.
The discretization problem, posed by Ali, Antoine, and Gazeau in
their physics textbook {\em Coherent States, Wavelets, and Their
Generalizations} \cite{AAG00},  asks when a continuous frame of a
Hilbert space can be sampled to obtain a frame.  They state that a
positive answer to the question is crucial for practical
applications of coherent states, and chapter 16 of the book is
devoted to the discretization problem. The first author
with Darrin Speegle solved the discretization problem in its full
generality by characterizing exactly when a continuous frame may be
sampled to obtain a frame \cite{FS19}.  

\begin{thm}[\cite{FS19}]
Let $(x_t)_{t\in \Omega}$ be a continuous frame of a separable Hilbert space $H$ over a measure space $(\Omega,\Sigma,\mu)$ such that singletons are measurable.  Then there exists $(t_j)_{j\in J}\subseteq \Omega$ such that $(x_{t_j})_{j\in J}$ is a frame of a $H$ if and only if there is a measure $\nu$ on $(\Omega,\Sigma)$ such that $(x_t)_{t\in \Omega}$ is a continuous frame of a $H$ over the measure space $(\Omega,\Sigma,\nu)$ and there is a constant $\beta>0$ so that $\|x_t\|\leq \beta$ for almost every $t\in\Omega$.
\end{thm}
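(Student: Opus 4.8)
First, the forward implication is elementary. If $(x_{t_j})_{j\in J}$ is a frame of $H$ with bounds $0<A\le B$, then substituting $x=x_{t_k}$ into \eqref{E:frame} gives $\|x_{t_k}\|^4\le\sum_{j\in J}|\langle x_{t_k},x_{t_j}\rangle|^2\le B\|x_{t_k}\|^2$, so $\|x_{t_j}\|\le\sqrt B$ for every $j$. Since singletons are measurable, $\nu:=\sum_{j\in J}\delta_{t_j}$ is a measure on $(\Omega,\Sigma)$ with $\int_\Omega|\langle x,x_t\rangle|^2\,d\nu=\sum_{j\in J}|\langle x,x_{t_j}\rangle|^2$ for every $x\in H$; hence $(x_t)_{t\in\Omega}$ is a continuous frame over $(\Omega,\Sigma,\nu)$ with the same bounds and $\|x_t\|\le\sqrt B$ for $\nu$-a.e.\ $t$, so one may take $\beta=\sqrt B$.

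For the converse, assume $(x_t)_{t\in\Omega}$ is a continuous frame of $H$ over $(\Omega,\Sigma,\nu)$ with bounds $A_0,B_0$ and $\|x_t\|\le\beta$ $\nu$-a.e. The plan is to reduce to a normalized situation and then invoke a measure-theoretic form of the Kadison--Singer theorem. There are three reductions. \emph{(i) Parseval normalization:} the frame operator $Sx=\int_\Omega\langle x,x_t\rangle x_t\,d\nu$ satisfies $A_0I\le S\le B_0I$, so $(S^{-1/2}x_t)_{t\in\Omega}$ is a Parseval continuous frame with $\|S^{-1/2}x_t\|\le A_0^{-1/2}\beta$, and $(S^{-1/2}x_{t_j})_{j\in J}$ is a frame of $H$ if and only if $(x_{t_j})_{j\in J}=(S^{1/2}(S^{-1/2}x_{t_j}))_{j\in J}$ is. \emph{(ii) $\sigma$-finiteness:} discarding $\{t:x_t=0\}$ and noting that $\int_\Omega|\langle e_i,x_t\rangle|^2\,d\nu=1$ for each vector $e_i$ of an orthonormal basis shows $\{t:x_t\neq0\}$ is $\sigma$-finite. \emph{(iii) Non-atomic measure:} replace $(\Omega,\Sigma,\nu)$ by the product with $([0,1],\mathcal B,\lambda)$ and set $y_{(t,r)}:=x_t$; by Fubini this is again a Parseval continuous frame with the same norm bound, the measure is now non-atomic, and a sampling of $(y_{(t,r)})$ yields a sampling of $(x_t)$. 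So it suffices to prove: \emph{every Parseval continuous frame over a $\sigma$-finite non-atomic space with $\|x_t\|\le\beta$ can be sampled to a frame of $H$ with bounds depending only on $\beta$.}

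To prove this I would establish a \emph{measurable halving lemma}: a continuous Bessel system over a $\sigma$-finite non-atomic space with bound $b$ and $\|x_t\|^2\le\varepsilon b$ admits a measurable partition $\Omega=\Omega_1\sqcup\Omega_2$ with each $(x_t)_{t\in\Omega_i}$ a continuous Bessel system of bound at most $\bigl(\tfrac1{\sqrt2}+\sqrt\varepsilon\bigr)^2 b$; since the two partial frame operators sum to the original one, each half also acquires a lower bound comparable to $b/2$. Proving the halving lemma is where the Marcus--Spielman--Srivastava theorem \cite{MSS15} enters: one partitions $\Omega$ into countably many measurable sets on each of which the partial frame operator has small norm, passes to an increasing sequence of finite-dimensional subspaces so as to apply Lyapunov's convexity theorem there (its exact analogue fails for operator-valued measures when $\dim H=\infty$), and uses \cite{MSS15} as the ``paving'' correction that transfers the finite-dimensional balancing to all of $H$ with controlled error. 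Iterating the halving lemma $O(\log(1/\varepsilon))$ times yields finitely many pieces of Bessel bound of order $\varepsilon b$; after a further measurable refinement making $t\mapsto x_t$ vary by less than a fixed $\eta$ on each cell, one selects sample points — roughly one per cell, with a multiplicity reflecting the cell's $\nu$-mass. The resulting discrete system is Bessel with frame operator within a bounded factor of $S=I$, hence a frame of $H$ with bounds depending only on $\beta$; undoing reduction (i) gives the theorem.

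The main obstacle is the passage from the finite-dimensional, finite-sum statement of \cite{MSS15} to the continuous statement just described, and it recurs at several points. There are no rank-one summands to redistribute when $\nu$ is non-atomic (handled by finite-dimensional approximation together with Lyapunov-type convexity); Lyapunov's theorem has no exact operator-valued analogue in infinite dimensions (handled by using \cite{MSS15} as the correction term); the multiplicative errors must be controlled through the $\sim\log(1/\varepsilon)$ iterations so that the final frame bounds are independent of $H$; and — the most delicate point — the passage from a cell of the partition to an honest sample point cannot be done by naive quadrature, since for a very spread-out measure counting points in proportion to cell mass may over- or under-sample, so one must first regroup cells by approximate direction and then let a paving estimate govern the selection within each group. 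Once this measure-theoretic form of Kadison--Singer is in place, the three reductions assemble it into the stated equivalence.
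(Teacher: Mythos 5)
This theorem is not proved in the paper you were given: it is quoted verbatim from \cite{FS19}, whose main content is precisely the hard direction. So the comparison can only be between your sketch and the actual argument of \cite{FS19}. Your forward implication is complete and correct (taking $\nu=\sum_{j\in J}\delta_{t_j}$, which is legitimate because singletons are measurable, and extracting $\|x_{t_j}\|\le\sqrt{B}$ from the upper frame bound), and your three reductions for the converse (Parseval normalization via $S^{-1/2}$, $\sigma$-finiteness of $\{t:x_t\neq 0\}$, and passing to a non-atomic measure by crossing with $[0,1]$) are standard and sound. The broad strategy you then describe --- partition $\Omega$ so that $t\mapsto x_t$ is nearly constant on cells (fine, by Pettis measurability and separability of $H$), and use the Marcus--Spielman--Srivastava theorem to control a sampling whose multiplicities reflect the $\nu$-mass of the cells --- is indeed in the same spirit as \cite{FS19}.

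However, the two steps that carry all the difficulty are asserted rather than proved, so there is a genuine gap. First, your ``measurable halving lemma'' for continuous Bessel families over non-atomic measures is essentially a continuous-parameter analogue of the MSS two-subset theorem; it does not follow from \cite{MSS15} by any routine limit, and your proposed mechanism (Lyapunov convexity on an increasing sequence of finite-dimensional subspaces, with \cite{MSS15} as a ``correction'') is exactly the place where an argument is needed and none is given --- you yourself note that the operator-valued Lyapunov theorem fails in infinite dimensions, and the claimed transfer ``to all of $H$ with controlled error'' is not substantiated. Second, and more fundamentally, even granting the halving lemma, iterating it only produces finitely many \emph{continuous} sub-families with small Bessel bound; the passage from cells to actual sample points (``roughly one per cell, with multiplicity reflecting the cell's $\nu$-mass'') is where the lower and upper frame bounds can be destroyed: non-integer masses force rounding errors of size up to $\beta^2$ per cell, and over infinitely many cells these must be organized (by direction and by mass) so that they sum to a bounded perturbation of the frame operator. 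In \cite{FS19} this is handled by approximating the continuous frame by one taking countably many values, reducing to a weighted discrete sampling problem, and invoking an iterated Kadison--Singer/paving argument to control exactly these rounding errors; your sketch gestures at this (``regroup cells by approximate direction and let a paving estimate govern the selection'') but does not carry it out. Since these two steps constitute the main theorem of \cite{FS19}, the proposal should be regarded as a correct reduction plus an outline, not a proof.
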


Furthermore, they prove the following quantized version.

\begin{thm}[\cite{FS19}]
There exists uniform constants $A,B>0$ such that the following holds.
Let $(x_t)_{t\in \Omega}$ be a continuous Parseval frame of a separable Hilbert space $H$ over a measure space $(\Omega,\Sigma,\mu)$ such that $\|x_t\|\leq 1$ for all $t\in\Omega$.  Then there exists $(t_j)_{j\in J}\subseteq \Omega$ such that $(x_{t_j})_{j\in J}$ is a frame of $H$ and
$$A\|x\|^2\leq \sum_{j\in J}|\langle x,x_{t_j}\rangle|^2\leq B\|x\|^2\hspace{1cm}\textrm{ for all }x\in H.
$$
\end{thm}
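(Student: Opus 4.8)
The plan is to extract the uniform constants $A,B$ from the qualitative discretization theorem stated just above, so that the only deep input is one already in hand (and, through it, the Marcus--Spielman--Srivastava theorem \cite{MSS15}, whose constants are absolute). Since only the existence of \emph{some} $A,B$ is asserted, I would argue by contradiction: suppose no such pair exists. Negating the statement and specializing to the pair $(1/n,n)$, for every $n\in\N$ we obtain a continuous Parseval frame $(x^{(n)}_t)_{t\in\Omega_n}$ of a separable Hilbert space $H_n$ with $\|x^{(n)}_t\|\le 1$ for all $t$, such that whenever $D\subseteq\Omega_n$ makes $(x^{(n)}_t)_{t\in D}$ a frame of $H_n$, its optimal frame bounds $A_D\le B_D$ satisfy $A_D<1/n$ or $B_D>n$. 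We may assume each $H_n\ne\{0\}$ and each $\Omega_n$ has measurable singletons.

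Next I would glue these examples together. Set $H=\bigoplus_{n\in\N}H_n$ (again separable), let $\iota_n\colon H_n\hookrightarrow H$ be the canonical inclusion, and let $\Omega=\bigsqcup_{n\in\N}\Omega_n$ carry the sum measure $\mu=\sum_n\mu_n$, so that singletons in $\Omega$ remain measurable. Define $y_s=\iota_n(x^{(n)}_t)$ for $s=t\in\Omega_n$. Writing any $y\in H$ as $y=\sum_n\iota_n(y_n)$ with $y_n\in H_n$, and using $\langle\iota_n(z),\iota_k(z')\rangle=0$ for $k\ne n$, one gets $\int_\Omega|\langle y,y_s\rangle|^2\,d\mu=\sum_n\int_{\Omega_n}|\langle y_n,x^{(n)}_t\rangle|^2\,d\mu_n=\sum_n\|y_n\|^2=\|y\|^2$, so $(y_s)_{s\in\Omega}$ is a continuous Parseval frame of $H$ with $\|y_s\|\le 1$. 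By the qualitative theorem there is a sampling $(s_j)_{j\in J}\subseteq\Omega$ for which $(y_{s_j})_{j\in J}$ is a frame of $H$, say with bounds $0<A_0\le B_0<\infty$.

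Finally I would restrict to each block. Fix $n$ and $y\in H_n$; since $\langle\iota_n(y),y_{s_j}\rangle=0$ whenever $s_j\notin\Omega_n$, the frame inequality for $(y_{s_j})$ applied to $\iota_n(y)$ collapses to $A_0\|y\|^2\le\sum_{j:\,s_j\in\Omega_n}|\langle y,x^{(n)}_{s_j}\rangle|^2\le B_0\|y\|^2$. Hence $(x^{(n)}_{s_j})_{s_j\in\Omega_n}$ is a frame of $H_n$ with bounds between $A_0$ and $B_0$, and this holds for \emph{every} $n$ with the same $A_0,B_0$. Choosing any $n>\max\{1/A_0,\,B_0\}$ contradicts the defining property of $(x^{(n)}_t)$, which proves the theorem. (Equivalently, and more directly, one can bypass the contradiction by tracking the frame bounds through the proof of the qualitative theorem in \cite{FS19}, where they depend only on the absolute constants of \cite{MSS15}.)

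I expect the main obstacle to lie not in this reduction — which is essentially bookkeeping — but entirely in the qualitative theorem it invokes: producing \emph{any} sampling of a bounded continuous Parseval frame that remains a frame is exactly where the Kadison--Singer/MSS machinery is unavoidable, and there is no elementary substitute. Within the reduction the only points needing care are verifying that the glued family is genuinely a continuous Parseval frame (the Parseval identity must decouple orthogonally across blocks) and noting that a block receiving no sample points would already violate $A_0>0$, so every block is automatically sampled into an honest frame of $H_n$.
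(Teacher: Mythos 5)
The paper itself gives no proof of this statement: it is quoted from \cite{FS19}, where the sampling points are produced directly by an iterative application of the Marcus--Spielman--Srivastava theorem \cite{MSS15}, so the constants $A,B$ are absolute from the start, and the qualitative characterization stated just before it is then a consequence of this quantitative version. Your argument runs in the opposite direction: you take the qualitative sampling theorem as a black box and upgrade it to uniform constants by gluing putative counterexamples over a disjoint union of measure spaces into a single continuous Parseval frame of $\bigoplus_n H_n$, sampling once, and restricting to each block. That reduction is sound: the glued family is a continuous Parseval frame with $\|y_s\|\le 1$, orthogonality of the blocks makes the sampled frame inequality decouple, a block receiving no (or too few) sample points would already violate the lower bound $A_0$, and the same pair $(A_0,B_0)$ then serves every block, contradicting the choice of the $n$-th counterexample once $n>\max(1/A_0,B_0)$. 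Two caveats are worth recording. First, the qualitative theorem as stated assumes singletons of $\Omega$ are measurable while the quantitative statement does not; your ``we may assume each $\Omega_n$ has measurable singletons'' is not automatic (one cannot in general adjoin all singletons to $\Sigma$ and extend $\mu$), so as written your argument proves the theorem only for base spaces satisfying that hypothesis --- harmless in practice and consistent with the formulations in \cite{FS19}, but it should be said. Second, as you yourself note, this is not an independent proof: in \cite{FS19} the uniform-constant version is what is actually proven (and is where the Kadison--Singer machinery enters), so your argument establishes the equivalence of the two cited statements rather than a new route to either; as a derivation of the stated theorem from the other stated theorem, however, it is correct.
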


We now consider how these results are connected to the problem of discretizing $L_p$ norms for subspaces.  If $(x_t)_{t\in \Omega}$ is a continuous Parseval frame for a Hilbert space $H$ then the analysis operator $T:H\rightarrow L_2(\Omega)$ is an isometric embedding of $H$ into $L_2(\Omega)$.  Being able to discretize the continuous frame $(x_t)_{t\in \Omega}$ to get a frame  $(x_{t_j})_{j\in J}$ of $H$ with lower frame bound $A$ and upper frame bound $B$ is then equivalent to discretizing the $L_2$ norm on the range of the analysis operator $T(H)\subseteq L_2(\Omega)$ so that $$A\|y\|^2\leq \sum_{j\in J}|y(t_j)|^2\leq B\|y\|^2 \,\,\,\text{for all} \,\,\, y \in T(H)$$ where if $y=Tx$ then $y(t)=\langle x, x_t\rangle$ for all $t\in \Omega$.  Furthermore, suppose that $Y\subseteq L_2(\Omega)$ is a closed subspace and $\beta>0$.  Then, $Y\subseteq L_2(\Omega)$ satisfies that $\|y\|_{L_\infty}\leq \beta\|y\|_{L_2}$ for all $y\in Y$ if and only if $Y$ is the range of the analysis operator of a continuous Parseval frame $(x_t)_{t\in\Omega}$ of a Hilbert space $H$ such that $\|x_t\|\leq \beta$ for all $t\in\Omega$. This gives the following corollary.

\begin{cor}\label{C:disc}
There exists uniform constants $A,B>0$ such that the following holds.
Let $(\Omega,\mu)$ be a $\sigma$-finite measure space and $\beta\geq 1$.  Suppose that $Y\subseteq L_2(\Omega)$ is a closed subspace such that $\|y\|_{L_\infty}\leq \beta\|y\|_{L_2}$ for all $y\in Y$.  Then there exists $(t_j)_{j\in J}\subseteq \Omega$ such that 
$$\beta^2 A\|y\|^2\leq \sum_{j\in J}|y(t_j)|^2\leq \beta^2 B\|y\|^2\hspace{1cm}\textrm{ for all }y\in Y.
$$
\end{cor}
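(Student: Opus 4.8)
The plan is to reduce the statement to the quantized discretization theorem of \cite{FS19} by presenting $Y$ as the range of the analysis operator of a continuous Parseval frame whose vectors are bounded by $\beta$, exactly as in the equivalence recorded in the paragraph preceding the corollary. First I would take $H=Y$ with the inner product inherited from $L_2(\Omega)$. For $\mu$-a.e.\ $t\in\Omega$ the hypothesis $\|y\|_{L_\infty}\leq\beta\|y\|_{L_2}$ forces $|y(t)|\leq\beta\|y\|_{L_2}$ for every $y\in Y$ simultaneously, so evaluation at $t$ is a bounded linear functional on $H$ of norm at most $\beta$; the Riesz representation theorem then yields $x_t\in H$ with $y(t)=\langle y,x_t\rangle$ for all $y\in Y$ and $\|x_t\|\leq\beta$ (on the exceptional $\mu$-null set I would simply set $x_t=0$). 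Since $\langle y,x_t\rangle=y(t)$ we get $\int_\Omega|\langle y,x_t\rangle|^2\,d\mu=\|y\|_{L_2}^2$ for all $y\in Y$, so $(x_t)_{t\in\Omega}$ is a continuous Parseval frame of $H$ whose analysis operator $T\colon H\to L_2(\Omega)$ is the inclusion $y\mapsto y$; in particular $T(H)=Y$ and $\|x_t\|\leq\beta$ for all $t$.

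Next I would rescale so that the frame vectors have norm at most one without destroying the Parseval property. Setting $\tilde x_t=\beta^{-1}x_t$ and $\tilde\mu=\beta^2\mu$, one has $\|\tilde x_t\|\leq1$ and, for every $y\in Y$, $\int_\Omega|\langle y,\tilde x_t\rangle|^2\,d\tilde\mu=\beta^{-2}\beta^2\int_\Omega|\langle y,x_t\rangle|^2\,d\mu=\|y\|_{L_2}^2$, so $(\tilde x_t)_{t\in\Omega}$ is a continuous Parseval frame of $H$ over $(\Omega,\Sigma,\tilde\mu)$ with $\|\tilde x_t\|\leq1$. The sampling set produced by \cite{FS19} is a subset of $\Omega$ itself and is insensitive to which measure $\Omega$ carries, so this rescaling costs nothing.

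Then I would invoke the quantized theorem of \cite{FS19}: there are uniform constants $A,B>0$ and a sampling $(t_j)_{j\in J}\subseteq\Omega$ with $A\|x\|^2\leq\sum_{j\in J}|\langle x,\tilde x_{t_j}\rangle|^2\leq B\|x\|^2$ for all $x\in H$. Unravelling $\tilde x_{t_j}=\beta^{-1}x_{t_j}$ and $\langle x,x_{t_j}\rangle=x(t_j)$ under the identification $H=Y$ turns this into $\beta^2A\|x\|_{L_2}^2\leq\sum_{j\in J}|x(t_j)|^2\leq\beta^2B\|x\|_{L_2}^2$ for all $x\in Y$, which is precisely the claimed inequality with the stated uniform constants.

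I expect the only genuine obstacle to lie in the first paragraph: checking that the $L_\infty$ bound lets one choose point evaluations defined off a single common $\mu$-null set, and that $t\mapsto x_t$ can be arranged to be weakly measurable so that $(x_t)_{t\in\Omega}$ really is a continuous frame in the sense of \eqref{E:frame2}---this is where $\sigma$-finiteness of $\mu$ (and, if needed, a reduction to the separable case so that \cite{FS19} applies) is used. Once $Y$ has been realized as the range of the analysis operator of a continuous Parseval frame with vectors bounded by $\beta$, the rescaling in the second paragraph and the appeal to \cite{FS19} in the third are routine.
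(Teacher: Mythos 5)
Your proposal is correct and follows essentially the same route as the paper: the corollary is obtained exactly by viewing $Y$ as the range of the analysis operator of a continuous Parseval frame $(x_t)_{t\in\Omega}$ with $\|x_t\|\leq\beta$ (the equivalence stated in the paragraph preceding the corollary) and then invoking the quantized discretization theorem of \cite{FS19}, with the $\beta$-rescaling accounting for the factors $\beta^2 A$ and $\beta^2 B$. Your explicit handling of the rescaling $\tilde x_t=\beta^{-1}x_t$, $\tilde\mu=\beta^2\mu$ and your flagging of the common-null-set/measurability issue are just spelled-out versions of what the paper leaves implicit.
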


Note that Corollary \ref{C:disc} applies to $\Omega$ being either a finite or infinite measure space and to $Y\subseteq L_2(\Omega)$ being either finite or infinite dimensional.  
In the case that $\Omega$ is a probability space then the following theorem gives the relationship between the dimension of the subspace, the $L_\infty$-bound on the subspace, and a bound on the number of sampling points required for discretization.

\begin{thm}[\cite{LT21}]\label{T:LT21}
There exists uniform constants $A,B,C>0$ such that the following holds.
Let $(\Omega,\mu)$ be a probability space and $\beta\geq 1$.  Suppose that $Y\subseteq L_2(\Omega)$ is a closed subspace such that $\|y\|_{L_\infty}\leq \beta N^{1/2}\|y\|_{L_2}$ for all $y\in Y$.  Then there exists $M\leq C\beta^2 N$ and sampling points $(t_j)_{j=1}^M\subseteq \Omega$ such that 
$$A\|y\|^2\leq \frac{1}{M}\sum_{j=1}^M|y(t_j)|^2\leq \beta^2 B\|y\|^2\hspace{1cm}\textrm{ for all }y\in Y.
$$
\end{thm}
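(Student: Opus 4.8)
The plan is to reduce Theorem~\ref{T:LT21} to a purely finite-dimensional statement about isotropic ensembles of bounded vectors in $\C^N$, and then to feed that statement into the Marcus--Spielman--Srivastava solution of the Kadison--Singer problem \cite{MSS15}. Fix an orthonormal basis $(e_i)_{i=1}^N$ of $Y$ and set $v(t)=(e_1(t),\dots,e_N(t))\in\C^N$. Orthonormality of the $e_i$ gives $\int_\Omega v(t)v(t)^*\,d\mu(t)=I_N$; for $y=\sum_i a_ie_i\in Y$ one has $|y(t)|^2=\langle v(t)v(t)^*a,a\rangle$; and $\|v(t)\|^2=\sum_i|e_i(t)|^2$ is exactly the squared norm of the point-evaluation functional $y\mapsto y(t)$ on $Y$, which by hypothesis is at most $\beta^2N$ for $\mu$-a.e.\ $t$. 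So it suffices to produce universal $A,B,C>0$ such that whenever $(v(t))_{t\in\Omega}$ is a continuous Parseval frame of $\C^N$ over a probability space with $\|v(t)\|^2\le\beta^2N$ a.e., there are $M\le C\beta^2N$ points $t_1,\dots,t_M$ with $A\,I_N\preceq\frac1M\sum_{j=1}^M v(t_j)v(t_j)^*\preceq \beta^2B\,I_N$.

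\textbf{Crude discretization, then iterated Kadison--Singer.} First I would sample $s_1,\dots,s_m$ independently according to $\mu$ and apply a matrix Chernoff bound: since $0\preceq v(t)v(t)^*\preceq\beta^2N\,I_N$ and $\E[v(t)v(t)^*]=I_N$, taking $m\asymp\beta^2N\log N$ makes $\tfrac34I_N\preceq\frac1m\sum_{k=1}^m v(s_k)v(s_k)^*\preceq\tfrac54I_N$ hold with positive probability; then $w_k:=m^{-1/2}v(s_k)$ form a frame of $\C^N$ with bounds in $[\tfrac34,\tfrac54]$ and $\|w_k\|^2\le\beta^2N/m\asymp1/\log N$. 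Next, by the MSS theorem ($KS_2$) there is a universal $C_0$ so that any finite family $(\phi_i)$ in $\C^N$ with $\sum_i\phi_i\phi_i^*\preceq b\,I_N$ and $\|\phi_i\|^2\le\delta$ splits into two subfamilies, each with upper frame bound $\le\tfrac b2+C_0\sqrt{b\delta}$ and hence, by complementarity, lower bound $\ge(\text{lower bound of the whole})-\tfrac b2-C_0\sqrt{b\delta}$. I would iterate this on the $w_k$: keep the smaller of the two pieces, rescale the surviving vectors by $\sqrt2$ so the total operator returns near its previous size while the per-vector bound doubles, and repeat. Starting from $\delta_0\asymp1/\log N$, after $t\asymp\log\log N$ steps the per-vector bound $\delta_t\asymp2^t/\log N$ first crosses a fixed small threshold; stop there. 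The surviving index set $J$ has $M:=|J|\le m\,2^{-t}\asymp\beta^2N$, and $\sum_{j\in J}\widehat w_j\widehat w_j^*$ with $\widehat w_j:=2^{t/2}w_j$ lies in $[A',B']I_N$ for universal $A',B'$, the accumulated distortion $\prod_t(1\pm C_0\sqrt{\delta_t})$ converging because the $\sqrt{\delta_t}$ grow geometrically and so sum to a small constant. Since $\widehat w_j=2^{t/2}m^{-1/2}v(t_j)$ this says $\sum_{j\in J}v(t_j)v(t_j)^*\in (m\,2^{-t})[A',B']\,I_N$; a frame of $\C^N$ needs $\ge N$ vectors, and $\operatorname{tr}\sum_j v(t_j)v(t_j)^*=\sum_j\|v(t_j)\|^2\le M\beta^2N$ forces $M\ge (m\,2^{-t})A'/\beta^2$, so dividing by $M$ keeps the lower bound $\ge A'$ while inflating the upper bound by a factor $\le\beta^2/A'$. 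This gives the claim with $A=A'$, $B=B'/A'$, and $C\asymp1$.

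\textbf{Where the difficulty lies.} The Christoffel-function identification and the matrix Chernoff step are routine; the real work is the bookkeeping in the iteration. One must choose the stopping time so that (i) $\delta_t$ stays in the regime where $KS_2$ is quantitatively useful, which is exactly what pins the final count at $\asymp\beta^2N$ rather than $N$; (ii) the multiplicative frame-bound errors accumulated over the $\asymp\log\log N$ halvings stay within absolute constants, which is why one renormalizes at each step and why $\delta_0$ must be driven down to $\asymp1/\log N$; and (iii) the fact that $KS_2$ does not split the index set evenly is neutralized by always retaining the smaller half, while the trace inequality $\sum_j\|v(t_j)\|^2\le M\beta^2N$ guarantees $M$ is comparable to $m\,2^{-t}$ from below and thereby channels the leftover factor $\beta^2$ onto the upper frame bound only. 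I expect balancing (i) and (ii) simultaneously to be the main obstacle, since the number of admissible halvings and the tolerance on distortion pull against each other.
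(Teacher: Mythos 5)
The paper itself does not prove this statement; it is quoted from Limonova--Temlyakov \cite{LT21}, whose argument has the same overall architecture you propose (Christoffel-function reduction, a preliminary discretization with $\asymp\beta^2N\log N$ points, MSS-based subset selection down to $\asymp\beta^2N$ points, and a trace argument that pushes the leftover $\beta^2$ onto the upper bound). Your reduction step, the matrix Chernoff step, and the final counting/trace step are all fine.

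The genuine gap is in the splitting lemma you iterate. You take from \cite{MSS15} only an upper-bound statement (each half has upper frame bound $\le b/2+C_0\sqrt{b\delta}$) and recover the lower bound of each half ``by complementarity,'' i.e.\ lower bound $\ge a-b/2-C_0\sqrt{b\delta}$ where $[a,b]$ are the current bounds. Track the spread $s=b-a$ through one halving-plus-rescaling: the new bounds are $[2a-b-2\epsilon,\,b+2\epsilon]$, so $s\mapsto 2s+4\epsilon$. Starting from the Chernoff bounds $[\tfrac34,\tfrac54]$ (spread $\tfrac12$), the lower bound is about $\tfrac14$ after one step and negative after two, whereas you need $\asymp\log\log N$ steps. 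This cannot be repaired by re-tuning $m$: to survive $t$ halvings with this lemma you would need initial spread $\lesssim 2^{-t}\asymp\beta^2N/m$, while Chernoff with $m$ samples only gives spread $\asymp\sqrt{\beta^2N\log N/m}$; combining the two forces $m\lesssim\beta^2N/\log N<N$, impossible for a frame of $\C^N$. The multiplicative error product $\prod_i(1\pm C_0\sqrt{\delta_i})$ you invoke is the right bookkeeping, but it is not what your stated lemma delivers.

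The fix is the two-sided form of the MSS splitting \emph{relative to the frame operator}: if $S=\sum_i\phi_i\phi_i^*$ has bounds $[a,b]$ and $\|\phi_i\|^2\le\delta$, apply the isotropic Corollary 1.5 of \cite{MSS15} to the whitened vectors $S^{-1/2}\phi_i$ (which have norms squared $\le\delta/a$) and un-whiten; each half then satisfies $\tfrac12(1-C\sqrt{\delta/a})\,S\preceq\sum_{i\in S_j}\phi_i\phi_i^*\preceq\tfrac12(1+C\sqrt{\delta/a})\,S$, so \emph{both} bounds halve up to multiplicative errors $1\pm C\sqrt{\delta/a}$. With this lemma (essentially the one used by Nitzan--Olevskii--Ulanovskii \cite{NOU16} and by \cite{LT21}), your geometric sequence $\delta_i\asymp 2^i/\log N$, the stopping rule at a small constant threshold, the choice of the smaller half, and the concluding trace argument all go through and yield the stated constants.
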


One of the many applications of frame theory is in the implementation of phase retrieval.  A frame $(x_j)_{j\in J} \subseteq H$ for a Hilbert space $H$ allows for any vector $x\in H$ to be linearly recovered from the collection of frame coefficients $(\langle x, x_j\rangle)_{j\in J}$.  
 However, there are many instances in physics and engineering where one is able to obtain only the magnitude of linear measurements such as in speech recognition \cite{BR99} and X-ray crystallography \cite{Ta03}.    Let $T:H\rightarrow \ell_2(J)$ be the analysis operator of $(x_j)_{j\in J}$ given by $T(x)=(\langle x, x_j\rangle)_{j\in J}$ for all $x\in H$.   For $x\in H$, the goal of phase retrieval is to recover $x$ (up to a unimodular scalar) from the absolute value of the frame coefficients $|T(x)|=(|\langle x, x_j\rangle|)_{j\in J}$. 
 We say that a frame  $(x_j)_{j\in J}$ does phase retrieval if whenever $x,y\in H$ and $|Tx|=|Ty|$ we have that $x=\lambda y$ for some scalar $\lambda$ with $|\lambda|=1$.  We say that  $(x_j)_{j\in J}$ does {\em $C$-stable} phase retrieval if $\min_{|\lambda|=1}\|x-\lambda y\|_H\leq C\||Tx|-|Ty|\|_{\ell_2(J)}$ for all $x,y\in H$.
 If we consider the equivalence relation $\sim$ on $H$ to be $x\sim y$ if and only if $x=\lambda y$ for some $|\lambda|=1$  then a frame $(x_j)_{j\in J}$ does $C$-stable phase retrieval is equivalent to the map $|T x|\mapsto x/\!\!\sim$ is well defined and is $C$-Lipschitz.  As any application will involve some error, having a good stability bound for phase retrieval is of fundamental importance in applications.
  Likewise, if  $(x_t)_{t\in \Omega}$ is a continuous frame of $H$ with frame operator $T:H\rightarrow L_2(\Omega)$ then we say that $(x_t)_{t\in \Omega}$ does {\em $C$-stable} phase retrieval if $\min_{|\lambda|=1}\|x-\lambda y\|_H\leq C\||Tx|-|Ty|\|_{L_2(\Omega)}$ for all $x,y\in H$.

Every frame for a finite dimensional Hilbert space which does phase retrieval does $C$-stable phase retrieval for some constant $C>0$ \cite{B17}\cite{CCPW13}.  On the other hand, phase retrieval using a frame or continuous frame for an infinite dimensional Hilbert space is always unstable \cite{CCD16}\cite{AG17}.  
Given some $C>0$ and dimension $N\in\N$, it is very difficult to explicitly construct a frame of $\ell_2^N$ which does $C$-stable phase retrieval.  However, there are random constructions where it is possible to choose $C>0$ such that a frame $(x_j)_{j=1}^m$ of  random vectors does $C$-stable phase retrieval with high probability and the number of vectors $m$ can be chosen on the order of the dimension $N$ \cite{CL14}\cite{EM13}\cite{KL18}\cite{CDFF21}.  Each of these results can be thought of as sampling a continuous Parseval frame over a probability space which does stable phase retrieval to obtain a frame which does stable phase retrieval.  This naturally leads to the following problem.

\begin{prob}
Let $C,\beta>0$.  Does there exist constants $D,\kappa>0$ so that for all $N\in\N$ there exists $M\leq DN$ such that the following holds?  Suppose that $H$ is an $N$-dimensional Hilbert space, $(\Omega,\mu)$ is a probability space, and $(x_t)_{t\in \Omega}$ is a continuous  Parseval frame of $H$ which does $C$-stable phase retrieval such that $\|\psi_t\|\leq \beta\sqrt{N}$ for all $t\in \Omega$. Then there exists a sequence of sampling points $(t_j)_{j=1}^M\subseteq\Omega$ such that $(\frac{1}{\sqrt{M}}x_{t_j})_{j=1}^M$ is a frame of $H$ which does $C$-stable phase retrieval.
\end{prob}

This problem seems particularly difficult as Theorem \ref{T:LT21} which relies on \cite{MSS15} can be thought of as a random sampling result which produces a good frame with low but positive probability.  However, all known methods of producing frames which do stable phase retrieval using a number of vectors on the order of the dimension use sub-Gaussian random variables where a random sampling will produce a good frame with high probability.  In the following theorem we connect the problem of constructions of frames which do stable phase retrieval to the problem of discretizing the $L_1$-norm on a subspace of $L_1(\Omega)$.
In particular, we prove that in order to sample a continuous Parseval frame to obtain a frame which does stable phase retrieval, it is necessary to simultaneously discretize both the $L_1$-norm and the $L_2$-norm on the range of the analysis operator.

\begin{thm}\label{T:pr}
Let $(x_t)_{t\in\Omega}$ be a continuous Parseval frame for an $N$-dimensional real Hilbert space $H$ over a probability space $\Omega$ which does $\kappa$-stable phase retrieval and $\|x_{t}\|_{H}\leq \beta\sqrt{N}$ for all $t\in\Omega$. Let $T:H\rightarrow L_2(\Omega)$ be the analysis operator of $(x_t)_{t\in\Omega}$.  Suppose that $(t_j)_{j=1}^M\subseteq \Omega$ is such that $(\frac{1}{\sqrt{M}}x_{t_j})_{j=1}^M$ is a frame of $H$ with upper frame bound $B$ and lower frame bound $A$ which does $C$-stable phase retrieval.   Then both the $L_2$ norm and the $L_1$ norm on the range of the analysis operator are discretized in the following way for all $y\in T(H)$,
\begin{enumerate}
    \item $A\|y\|_{L_2(\Omega)}^2 \leq \dfrac{1}{M}\displaystyle \sum_{j=1}^M |y(t_j)|^2\leq B\|y\|_{L_2(\Omega)}^2$,
    \item $A^{1/2}B^{-3/2}C^{-3}(1+A^{-1}\beta^2)^{-3/2}\|y\|_{L_1(\Omega)} \leq \dfrac{1}{M}\displaystyle \sum_{j=1}^M |y(t_j)|\leq B^{1/2} \kappa^3(1+\beta^2)^{3/2}\|y\|_{L_1(\Omega)}$.
\end{enumerate}
\end{thm}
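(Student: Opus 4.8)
The plan is to derive the $L_2$ discretization (1) as essentially a restatement of the frame bounds of $(\frac{1}{\sqrt M}x_{t_j})_{j=1}^M$ under the analysis-operator identification, and then to obtain the $L_1$ discretization (2) by comparing the stable phase retrieval constants on both sides. For (1), recall that if $y = Tx \in T(H)$ then $y(t_j) = \langle x, x_{t_j}\rangle$, and since $(x_t)_{t\in\Omega}$ is a continuous Parseval frame, $\|y\|_{L_2(\Omega)}^2 = \|x\|_H^2$. The frame bounds $A,B$ of $(\frac{1}{\sqrt M}x_{t_j})_{j=1}^M$ say exactly that $A\|x\|_H^2 \leq \frac{1}{M}\sum_{j=1}^M |\langle x,x_{t_j}\rangle|^2 \leq B\|x\|_H^2$, which is (1) after substituting $\|x\|_H^2 = \|y\|_{L_2(\Omega)}^2$ and $\langle x,x_{t_j}\rangle = y(t_j)$.

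The heart of the matter is (2), and the key observation is that stable phase retrieval constants are themselves discretized norms. First I would record the general principle: if $(\Omega,\mu)$ is a probability space, a continuous frame over $\Omega$ with analysis operator $T$ does $\kappa$-stable phase retrieval iff $\min_{|\lambda|=1}\|x-\lambda x'\|_H \leq \kappa\, \||Tx|-|Tx'|\|_{L_2(\Omega)}$; and crucially, for real Hilbert spaces the quantity $\||Tx|-|Tx'|\|$ can be bounded above and below in terms of $\||Tx|^2 - |Tx'|^2\|_{L_1}$ via the pointwise identity $|a-b| \leq \big||a|-|b|\big| \cdot (\text{something}) $— more precisely one uses that $\big| |a|-|b| \big|^2 \leq \big| a^2 - b^2\big|$ always, and $\big| a^2-b^2\big| = |a-b|\,|a+b| \leq \big||a|-|b|\big|\,(|a|+|b|)$, so integrating and using Cauchy--Schwarz together with the upper frame bound $\beta^2 N$ (or the discretized analogue) controls $\||Tx|^2-|Tx'|^2\|_{L_1}$ from both sides by $\||Tx|-|Tx'|\|_{L_2}^2$ up to factors involving $\beta$ and the frame bounds. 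This converts ``does $\kappa$-stable phase retrieval'' into a two-sided comparison between $\min_{|\lambda|=1}\|x-\lambda x'\|_H^2$ and $\big\| |Tx|^2 - |Tx'|^2 \big\|_{L_1(\Omega)}$, and the same for the discrete frame with $\|y\|_{L_1}$ replaced by $\frac{1}{M}\sum_j |y(t_j)|$ — noting that $|Tx|^2 - |Tx'|^2$ as a function equals $y^2 - {y'}^2$ where $y=Tx$, $y'=Tx'$, and these are differences of squares in $T(H)$, not arbitrary elements of $T(H)$.

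The final step is a linearization argument to pass from differences of squares back to the subspace $T(H)$ itself. For fixed $y\in T(H)$ with $y = Tx$, I would choose $x' $ in the direction making $y' = Tx'$ nearly ``aligned'' with $y$ — concretely, comparing $x$ and $-x$ (or $x$ and a small perturbation) so that $\min_{|\lambda|=1}\|x-\lambda x'\|_H$ is comparable to $\|x\|_H = \|y\|_{L_2}$, while $|Tx|^2 - |Tx'|^2$ becomes (a scalar multiple of) $y^2$ in the limit, whose $L_1$ norm is $\|y\|_{L_2}^2$ and whose discretized version is $\frac{1}{M}\sum_j |y(t_j)|^2$. Chaining the continuous $\kappa$-stable-PR comparison against the discrete $C$-stable-PR comparison, and inserting the frame-bound factors $A,B$ and the norm-bound factors $(1+\beta^2)$, $(1+A^{-1}\beta^2)$ (the latter accounting for the upper frame bound of the discrete system in $L_\infty$, which is governed by $\|x_{t_j}\|^2 \leq \beta^2 N$ together with the lower frame bound $A$), yields the stated two-sided bound with exponents $3/2$ and $3$ appearing because each stable-PR inequality contributes one power and the square-root conversions between $L_1$ of squares and $L_2$ of the functions contribute the half-powers. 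I expect the main obstacle to be bookkeeping the exact constants: tracking how the $L_\infty$ bound $\beta\sqrt N$ on $\|x_t\|$ interacts with the lower frame bound $A$ to produce the factor $(1+A^{-1}\beta^2)$ on the discrete side versus $(1+\beta^2)$ on the continuous side, and making sure the linearization/perturbation in $x'$ is taken in a way that is uniform over all $y\in T(H)$ rather than just for a single vector. The algebraic inequalities $\big||a|-|b|\big|^2 \leq |a^2-b^2| \leq \big||a|-|b|\big|(|a|+|b|)$ and Cauchy--Schwarz are the only analytic tools needed; everything else is careful constant-chasing.
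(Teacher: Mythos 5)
Your treatment of (1) is correct and is exactly what the paper does: for $y=Tx$ the Parseval property gives $\|y\|_{L_2(\Omega)}=\|x\|_H$, and the frame bounds of $(\frac{1}{\sqrt M}x_{t_j})_{j=1}^M$ are literally statement (1). The upper half of the square--sum comparison $\frac1M\sum|y(t_j)|\leq(\frac1M\sum|y(t_j)|^2)^{1/2}$ is also the right first step for the upper bound in (2).

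However, your route to (2) has a genuine gap at the linearization step, and it is exactly the step that carries all the content. If you take $x'$ nearly aligned with $x$ (say $x'=cx$, or $x'=-x$), then $\min_{|\lambda|=1}\|x-\lambda x'\|_H=|1-|c||\,\|x\|_H$ and $|Tx|-|Tx'|=|1-|c||\,|y|$, so the $\kappa$-stable (resp.\ $C$-stable) phase retrieval inequality degenerates to the lower frame inequality $\|x\|_H\leq\kappa\|y\|_{L_2}$ and yields no information about $\|y\|_{L_1}$ or $\frac1M\sum|y(t_j)|$. Moreover, the quantity your limit produces is $|Tx|^2-|Tx'|^2\to(\text{const})\,y^2$, whose $L_1(\Omega)$ norm is $\|y\|_{L_2(\Omega)}^2$ and whose discretization is $\frac1M\sum|y(t_j)|^2$; chaining these only reproves statement (1), never the first-power inequality (2). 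The nontrivial fact you need --- and cannot get from collinear or perturbative pairs --- is that stable phase retrieval together with the $L_\infty$ bound forces the $L_1$ and $L_2$ norms to be \emph{equivalent} on the range of the analysis operator. The paper isolates this as Lemma \ref{L:stabpr}: for a subspace $X^N\subseteq L_2(\Omega)$ with $\|x\|_{L_\infty}\leq\beta\sqrt N\|x\|_{L_2}$ and $\min(\|f-g\|_{L_2},\|f+g\|_{L_2})\leq\kappa\||f|-|g|\|_{L_2}$, one has $\|x\|_{L_2}\leq\kappa^3(1+\beta^2)^{3/2}\|x\|_{L_1}$. Its proof is non-perturbative: given a unit vector $x$, set $\gamma=\|x\|_{L_1}^{1/3}$ and $S=\{|x|>\gamma\}$ (small by Markov), use averaging over an orthonormal basis together with the evaluation bound $\|\psi_t\|\leq\beta\sqrt N$ to find a \emph{second, genuinely different} unit vector $z$ in the subspace with $\|P_S z\|_{L_2}\leq\beta\,\mathrm{Prob}(S)^{1/2}$, and apply stable phase retrieval to $f=x+z$, $g=x-z$, where $\big||f|-|g|\big|=2\min(|x|,|z|)$ is small in $L_2$ while $\min(\|f-g\|,\|f+g\|)=2$. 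The theorem then follows by applying this lemma twice: once to $T(H)\subseteq L_2(\Omega)$ with constants $\kappa,\beta$ (giving the upper bound in (2)), and once to $T_{[M]}H\subseteq L_2([M])$ with $[M]$ carrying the uniform probability measure, stability constant $B^{1/2}C$ and $L_\infty$ constant $A^{-1/2}\beta$ (giving the lower bound, after comparing $\|y\|_{L_2([M])}$ with $\|y\|_{L_2(\Omega)}$ via the frame bounds and using $\|y\|_{L_1(\Omega)}\leq\|y\|_{L_2(\Omega)}$). Your constant-guessing for $(1+\beta^2)$ versus $(1+A^{-1}\beta^2)$ is directionally right, but without an argument of the above type the two-sided $L_1$ discretization does not follow from your outline.
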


Before proving Theorem \ref{T:pr} we will prove the following lemma.

\begin{lem}\label{L:stabpr}
Let $\Omega$ be a probability space and let $X^N$ be an $N$-dimensional subspace of $ L_2(\Omega)$.  Suppose $\kappa,\beta>0$ are such that $\|x\|_{L_\infty}\leq \beta\sqrt{N}\|x\|_{L_2}$ for all $x\in X^N$ and that
\begin{equation}\label{E:stab_subspace}
\min(\|f-g\|_{L_2},\|f+g\|_{L_2})\leq \kappa \Big\||f|-|g|\Big\|_{L_2} \hspace{1cm}\textrm{ for all }f,g\in X^N.  
\end{equation}
 Then,
$\|x\|_{L_1}\leq \|x\|_{L_2}\leq \kappa^3(1+\beta^2)^{3/2} \| x\|_{L_1}$ for all $x\in X^N$.
\end{lem}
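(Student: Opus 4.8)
The bound $\|x\|_{L_1}\le\|x\|_{L_2}$ is immediate from the Cauchy--Schwarz inequality since $\mu$ is a probability measure, so the whole content is the reverse inequality. By homogeneity it suffices to fix $x\in X^N$ with $\|x\|_{L_2}=1$ and to prove $\|x\|_{L_1}\ge\big(\kappa^3(1+\beta^2)^{3/2}\big)^{-1}$. The plan has two steps: first distill from the stability hypothesis \eqref{E:stab_subspace} a quantitative complement property for $X^N$, and then play it against the Nikol'skii-type hypothesis on a carefully chosen level set of $x$.

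\emph{Step 1 (a quantitative complement property).} Fix a measurable $B\subseteq\Omega$, put $z=x(\mathbbm{1}_{B^c}-\mathbbm{1}_B)$, and let $g=P_{X^N}z\in X^N$ be the $L_2$-orthogonal projection of $z$. Then $|z|=|x|$ pointwise, and since $x\in X^N$ we have $g=x-2P_{X^N}(x\mathbbm{1}_B)$; hence $\|g-x\|_{L_2}=2\|P_{X^N}(x\mathbbm{1}_B)\|_{L_2}$, $\|g+x\|_{L_2}=2\|P_{X^N}(x\mathbbm{1}_{B^c})\|_{L_2}$, and $\big\||g|-|x|\big\|_{L_2}\le\|g-z\|_{L_2}=\dist(z,X^N)=2\dist(x\mathbbm{1}_B,X^N)$. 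Writing $d=\dist(x\mathbbm{1}_B,X^N)$, Pythagoras gives $\|P_{X^N}(x\mathbbm{1}_B)\|_{L_2}^2=\|x\mathbbm{1}_B\|_{L_2}^2-d^2$, and since $x\mathbbm{1}_{B^c}=x-x\mathbbm{1}_B$ with $x\in X^N$ also $\|P_{X^N}(x\mathbbm{1}_{B^c})\|_{L_2}^2=\|x\mathbbm{1}_{B^c}\|_{L_2}^2-d^2$. Feeding $f=x$ and this $g$ into \eqref{E:stab_subspace} and rearranging yields
\[
\dist(x\mathbbm{1}_B,X^N)\ \ge\ \frac{1}{\sqrt{1+\kappa^2}}\,\min\big(\|x\mathbbm{1}_B\|_{L_2},\,\|x\mathbbm{1}_{B^c}\|_{L_2}\big)\qquad\text{for every measurable }B\subseteq\Omega.
\]
(When $B$ is chosen with $\|x\mathbbm{1}_B\|_{L_2}=\|x\mathbbm{1}_{B^c}\|_{L_2}$ one even has $\langle g,x\rangle=\langle z,x\rangle=0$, so $\|g\pm x\|_{L_2}^2\ge\|x\|_{L_2}^2$ and the estimate above can be sharpened directly.)

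\emph{Step 2 (balancing against the Nikol'skii bound).} Choose $\lambda>0$ with $\int_{\{|x|>\lambda\}}|x|^2\,d\mu=\tfrac12$ (splitting an atom of $\mu$, or passing to a value arbitrarily close to $\tfrac12$, if $\mu$ is not atomless), and set $B=\{|x|>\lambda\}$, so that $\|x\mathbbm{1}_B\|_{L_2}=\|x\mathbbm{1}_{B^c}\|_{L_2}=2^{-1/2}$ and Step~1 gives $\dist(x\mathbbm{1}_B,X^N)\ge\big(2(1+\kappa^2)\big)^{-1/2}$. On the complement $|x|\le\lambda$, so $\tfrac12=\int_{B^c}|x|^2\,d\mu\le\lambda\|x\|_{L_1}$ and hence $\|x\|_{L_1}\ge\tfrac1{2\lambda}$; it therefore remains to bound $\lambda$ above by $\tfrac12\kappa^3(1+\beta^2)^{3/2}$. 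Since $|x|>\lambda$ on $B$ we have $\mu(B)<\tfrac1{2\lambda^2}$, i.e. $x\mathbbm{1}_B$ is a function of $L_2$-norm $2^{-1/2}$ living on a set of small measure. The idea is to use the Nikol'skii-type hypothesis $\|y\|_{L_\infty}\le\beta\sqrt N\|y\|_{L_2}$ on $X^N$ — equivalently the pointwise kernel bound $\|\xi_t\|_{L_2}\le\beta\sqrt N$ — together with the identity $\dist(x\mathbbm{1}_B,X^N)^2=\|x\mathbbm{1}_B\|_{L_2}^2-\|P_{X^N}(x\mathbbm{1}_B)\|_{L_2}^2$ to show that this lower bound on $\dist(x\mathbbm{1}_B,X^N)$ cannot survive unless $\lambda$ is already controlled in terms of $\kappa$ and $\beta$ alone; optimizing the resulting inequality over $\lambda$ produces the stated constant, the exponent $\tfrac32$ and the factor $\kappa^3$ arising because the argument composes the splitting of $\|x\|_{L_2}^2$ at level $\lambda$, the complement-property estimate of Step~1, and the Nikol'skii bound.

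The hard part will be exactly this last comparison. Step~1 is robust and dimension-free, but on its own it only asserts that the sign-flipped piece $x\mathbbm{1}_B$ stays a definite distance from $X^N$, and on a small-measure set the Nikol'skii bound predicts the same thing (it controls $\|P_{X^N}(x\mathbbm{1}_B)\|_{L_2}$ from \emph{above}), so the two estimates must be weighed against each other quantitatively rather than simply contradicted, and one must check that the factors of $N$ on the two sides genuinely cancel. The balancing of the $L_2$-mass is essential: flipping signs on a one-sided tail without it makes $P_{X^N}z$ automatically lie within $2\|x\mathbbm{1}_B\|_{L_2}$ of $x$ (and within $2\|x\mathbbm{1}_{B^c}\|_{L_2}$ of $-x$), so \eqref{E:stab_subspace} gives nothing there. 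The remaining points — the treatment of atoms of $\mu$ and the harmless case where $x$ vanishes on a set of positive measure — are routine.
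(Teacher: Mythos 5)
Your Step~1 is correct as far as it goes (the identities $x-g=2P_{X^N}(x\mathbbm{1}_B)$, $x+g=2P_{X^N}(x\mathbbm{1}_{B^c})$, $\big\||g|-|x|\big\|_{L_2}\le 2\,\dist(x\mathbbm{1}_B,X^N)$ and the resulting bound $\dist(x\mathbbm{1}_B,X^N)\ge (1+\kappa^2)^{-1/2}\min(\|x\mathbbm{1}_B\|_{L_2},\|x\mathbbm{1}_{B^c}\|_{L_2})$ all check out), but the proof is not complete: Step~2 is only a plan, and the mechanism you propose cannot close it. On the balanced superlevel set $B=\{|x|>\lambda\}$ your Step~1 gives a \emph{lower} bound on $d=\dist(x\mathbbm{1}_B,X^N)$. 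The Nikol'skii hypothesis, used on the small-measure set $B$, gives $\|P_{X^N}(x\mathbbm{1}_B)\|_{L_2}\le\beta\sqrt{N}\,\mu(B)^{1/2}\|x\mathbbm{1}_B\|_{L_2}$ (test against unit vectors of $X^N$ and use $\|y\|_{L_\infty}\le\beta\sqrt N$), i.e.\ an \emph{upper} bound on the projection, which via Pythagoras is again a \emph{lower} bound on the same $d$. Two lower bounds on $d$ are compatible for every value of $\lambda$, so no upper bound on $\lambda$ (hence no lower bound on $\|x\|_{L_1}$) can be "optimized out" of this comparison, and there is no cancellation of the factors of $N$ to check: nothing in your scheme ever produces an upper bound on $d$ or a lower bound on the projection. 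You flag this difficulty yourself, but flagging it does not fill it; the argument stops exactly where the content of the lemma begins. A structural reason it must fail is that your test pair for \eqref{E:stab_subspace} is $f=x$ and $g=P_{X^N}\big(x(\mathbbm{1}_{B^c}-\mathbbm{1}_B)\big)$, which involves no genuinely new element of $X^N$; such a pair cannot detect concentration of $x$ on a small set.

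The missing idea (and the route the paper takes) is to use the Nikol'skii bound to manufacture a \emph{companion} unit vector of $X^N$ living essentially off the concentration set, and to feed that pair into \eqref{E:stab_subspace}. Normalize $\|x\|_{L_2}=1$, set $\gamma=\|x\|_{L_1}^{1/3}$ and $S=\{|x|>\gamma\}$, so $\mu(S)\le\|x\|_{L_1}^{2/3}$ by Markov. Averaging $\sum_{j=1}^N\|e_j\mathbbm{1}_S\|_{L_2}^2=\int_S\|\psi_t\|^2\,d\mu\le\mu(S)\beta^2N$ over an orthonormal basis $(e_j)$ of $X^N$ (here $\psi_t$ is the reproducing kernel, $\|\psi_t\|\le\beta\sqrt N$) yields $y\in X^N$ with $\|y\|_{L_2}=1$ and $\|y\mathbbm{1}_S\|_{L_2}\le\beta\,\mu(S)^{1/2}$. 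Taking $f=x+y$, $g=x-y$ one has $\|f\pm g\|_{L_2}=2$, while pointwise $\big||f|-|g|\big|=2\min(|x|,|y|)$, which is at most $2|y|$ on $S$ and at most $2\gamma$ off $S$, so $\big\||f|-|g|\big\|_{L_2}\le 2(1+\beta^2)^{1/2}\|x\|_{L_1}^{1/3}$; then \eqref{E:stab_subspace} gives $2\le 2\kappa(1+\beta^2)^{1/2}\|x\|_{L_1}^{1/3}$, and cubing yields $\|x\|_{L_2}\le\kappa^3(1+\beta^2)^{3/2}\|x\|_{L_1}$. If you want to salvage your write-up, replace Step~2 by this construction; your Step~1, while a correct consequence of stability, is not needed.
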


\begin{proof}
Let $x\in X^N$ with $\|x\|_{L_2}=1$. Note that $\|x\|_{L_1}\leq \|x\|_{L_2}$ as $\Omega$ is a probability space.
Let $\gamma=\|x\|_{L_1}^{1/3}$.
We have by Markov's inequality that
$$
\|x\|_{L_1}\geq \gamma Prob(|x|>\gamma).
$$
Hence, $Prob(|x|>\gamma)\leq \|x\|_{L_1}^{2/3}$.
Let $S=\{t\in \Omega:|x(t)|>\gamma\}$ and $P_S$ be the restriction operator from $L_2(\Omega)$ to $L_2(S)$.  Let $(e_j)_{j=1}^N$ be an orthonormal basis for $X^N$.  For each $t\in S$ there exists $\psi_t\in X$ such that $\langle x,\psi_t\rangle=x(t)$ for all $x\in X^N$.  Note that $\|\psi_t\|_{L_2}\leq \beta\sqrt{N}$ for all $t\in S$.  We have that
\begin{align*}
   \sum_{j=1}^N  \| P_S e_j\|^2_{L_2}&= \sum_{j=1}^N \int_S |e_j(t)|^2 dt\\
    &= \sum_{j=1}^N\int_{S} |\langle e_j,\psi_t\rangle|^2 dt\\
    &= \int_{S} \sum_{j=1}^N |\langle e_j,\psi_t\rangle|^2 dt\\
&= \int_{S} \|\psi_t\|^2 dt\\
&\leq Prob(S) \beta^2 N
\end{align*}
Thus, there exists $1\leq j\leq N$ such that $\|P_S e_j\|_{L_2}\leq (Prob(S))^{1/2} \beta$.  In particular, there exists $y\in X^N$ with $\|y\|_{L_2}=1$ and $\|P_S y\|_{L_2}\leq (Prob(S))^{1/2} \beta$.

Let $f=x+y$ and $g=x-y$.  As  $\|x\|_{L_2}=\|y\|_{L_2}=1$ we have that $\|f-g\|_{L_2}=\|f+g\|_{L_2}=2$.
We now obtain an upper bound for $\big\| |f|-|g|\big\|_{L_2}$.
\begin{align*}
\big\| |f|-|g|\big\|^2_{L_2}&=\big\| |x+y|-|x-y|\big\|^2_{L_2}\\
&=\int (2\min(|x(t)||y(t)|)^2 dt\\
&=4\int_{S} (\min(|x(t)||y(t)|)^2 dt+4\int_{S^c} (\min(|x(t)||y(t)|)^2 dt\\
&\leq 4\int_{S} |y(t)|^2 dt+4\int_{S^c} |x(t)|^2 dt\\
&\leq 4 \, Prob(S)\beta^2+4\gamma^2\quad \quad \quad \textrm{\small (as $\|P_S y\|^2_{L_2}\leq {Prob}(S) \beta^2$ and $|x(t)|\leq\gamma$ for all  $t\in S^c$).} \\
&\leq 4 \|x\|_{L_1}^{2/3}\beta^2+4\|x\|_{L_1}^{2/3}
\end{align*}
Thus, we have that 
\begin{equation}\label{E:L1}
\frac{1}{8}\Big\| |f|-|g|\Big\|^3_{L_2}\leq (1+\beta^2)^{3/2}\|x\|_{L_1}
\end{equation}
As $\|x\|_{L_2}=1$ and $\|f-g\|_{L_2}=\|f+g\|_{L_2}=2$ we have by \eqref{E:stab_subspace} and \eqref{E:L1} that 
$$\|x\|_{L_2}=\|x\|^{3}_{L_2}= \frac{1}{8}
\min(\|f-g\|^{3}_{L_2},\|f+g\|^{3}_{L_2})\leq \frac{1}{8}\kappa^3 \Big\||f|-|g|\Big\|_{L_2}^3
\leq \kappa^3 (1+\beta^2)^{3/2}\|x\|_{L_1}.
$$
\end{proof}

We now show that Theorem \ref{T:pr} follows from Lemma \ref{L:stabpr} 

\begin{proof}

Let $y=T(x)\in T(H)$ with $\|y\|_{L_2(\Omega)}=1$.  As $(x_t)_{t\in\Omega}$ is a Parseval frame for $H$, we have that $\|y\|_{L_2(\Omega)}=\|x\|_H=1$. 
Note that $y(t)=\langle x, x_{t}\rangle$ for all $t\in\Omega$.
As $(\frac{1}{\sqrt{M}}x_{t_j})_{j=1}^M$ is a frame of $H$ with lower frame bound $A$ and upper frame bound $B$, we have that
$$A\|y\|^2_{L_2(\Omega)}\leq\frac{1}{M} \sum_{j=1}^M |y(t_j)|^2\leq B\|y\|^2_{L_2(\Omega)}.
$$
We now have the following upper bound.
\begin{align*}
    \frac{1}{M}\sum_{j=1}^M |y(t_j)|&\leq \Big(\frac{1}{M}\sum_{j=1}^M |y(t_j)|^2\Big)^{1/2}\\
    &\leq B^{1/2} \|y\|_{L_2(\Omega)}\\
    &\leq B^{1/2} \kappa^3(1+\beta^2)^{3/2}\|y\|_{L_1(\Omega)}\hspace{1cm}\textrm{ \small by Lemma \ref{L:stabpr}}.
\end{align*}

We have that $(\frac{1}{\sqrt{M}}x_{t_j})_{j=1}^M$ is a frame with lower frame bound $A$ and upper frame bound $B$ which does $C$-stable phase retrieval.  Let the set $[M]=\{1,2,..,M\}$ be given the uniform probability measure.  Then, $(x_{t_j})_{j\in[M]}$ is a continuous frame with lower frame bound $A$ and upper frame bound $B$ which does $C$-stable phase retrieval.  Let $T_{[M]}:H\rightarrow L_2([M])$ be the analysis operator of $(x_{t_j})_{j\in[M]}$. Thus, we have for all $f,g\in H$ that $$\min_{|\lambda|=1}\|T_{[M]}f-\lambda T_{[M]}g\|_{L_2([M])}\leq B^{1/2} \min_{|\lambda|=1}\|f-\lambda g\|_{H}\leq B^{1/2}C\big\||T_{[M]}f|-|T_{[M]}g|\big\|_{L_2([M])}.$$
Furthermore, as $\|x_{t}\|_{H}\leq \beta N^{1/2}$ for all $t\in\Omega$, we have for all $f\in H$ that
$$\|T_{[M]}f\|_{L_\infty([M])}=\sup_{j\in[M]} |\langle f,x_{t_j}\rangle|\leq \|f\|_H \beta N^{1/2}\leq A^{-1/2} \beta N^{1/2}\|T_{[M]} f\|_{L_2([M])} .
$$

We can thus apply Lemma \ref{L:stabpr} to the subspace $T_{[M]}H\subseteq L_2(\Omega)$ with stability constant $B^{1/2}C$ and $L_\infty$ bound $A^{-1/2} \beta$ to calculate the following.
\begin{align*}
    \frac{1}{M}\sum_{j=1}^M |y(t_j)|&=\|y\|_{L_1([M])}\\
    &\geq B^{-3/2}C^{-3}(1+A^{-1}\beta^2)^{-3/2} \|y\|_{L_2([M])}\hspace{1cm}\textrm{ \small by Lemma \ref{L:stabpr}}\\
    &\geq A^{1/2}B^{-3/2}C^{-3}(1+A^{-1}\beta^2)^{-3/2}  \|y\|_{L_2(\Omega)}\\
 &\geq A^{1/2}B^{-3/2}C^{-3}(1+A^{-1}\beta^2)^{-3/2}  \|y\|_{L_1(\Omega)}
\end{align*}
\end{proof}

\section{Discretizing infinite dimensional subspaces of $L_p$}\label{S:infinite}

In sections \ref{S:L1} and \ref{S:Lp} we showed that Theorem \ref{T:LT21} does not hold for finite dimensional subspaces of $L_p[0,1]$ for $1\leq p<2$.  We now show that Corollary \ref{C:disc} fails in a strong way for infinite dimensional subspaces of $L_p(\R)$ for $1\leq p<2$.

\begin{prop}
For all $1\leq p<2$ there exists a subspace $Y\subseteq L_p(\R)$ such that $Y$ is isomorphic to $\ell_p$ and $\|x\|_{L_\infty}\leq \|x\|_{L_p}$ for all $x\in Y$, and the following holds. If $J\subseteq \R$ is such that $\sup_{t\in J}|x(t)|\neq 0$ for all $x\in Y\setminus\{0\}$ then there exists $y\in Y$ such that 
$\sum_{t\in J}|y(t)|^p=\infty$.
\end{prop}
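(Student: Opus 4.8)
The plan is to glue together, on pairwise disjoint pieces of $\mathbb R$, infinitely many rescaled copies of the finite–dimensional examples built in the previous sections, after padding each copy with a flat ``tail'' so that its $L_\infty$--to--$L_p$ ratio drops to $1$. Fix $N_k\uparrow\infty$ growing as fast as we please. For each $k$ I would construct a finite–dimensional block $W_k\subseteq L_p(\mathbb R)$ supported on an interval $E_k$, with the $E_k$ pairwise disjoint, such that: (a) $W_k$ is $C$--isomorphic to $\ell_p^{N_k}$ with $C$ independent of $k$; (b) $\|f\|_{L_\infty}\le\|f\|_{L_p}$ for all $f\in W_k$; and (c) $W_k$ contains a unit vector $v_k$ which is ``flat'' (constant modulus) on a subset $S_k\subseteq E_k$ of small measure, in such a way that {\it every} finite $(t_j)\subseteq E_k$ whose evaluation functionals separate $W_k$ satisfies $\sum_j|v_k(t_j)|^p\ge c\,N_k\,\|v_k\|_{L_p}^p$ for an absolute constant $c>0$. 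For $p=1$ the blocks of Theorem \ref{T:L1} already have these properties (they are isomorphic to $\ell_1^{N_k}$, and (c) is the content of Theorem \ref{T:L1}(4)); for $1\le p<2$ property (c) is to be extracted from the flat vector $y_1$ of Proposition \ref{P:bad}(3), the Rademacher block being turned into one with property (a) by adjoining a large disjointly supported tail (see the last paragraph).

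With the $W_k$ in hand, set $Y=\overline{\operatorname{span}}\bigcup_k W_k$. Since the $E_k$ are pairwise disjoint, $\|\sum_k f_k\|_{L_p}^p=\sum_k\|f_k\|_{L_p}^p$ and $\|\sum_k f_k\|_{L_\infty}=\sup_k\|f_k\|_{L_\infty}$ for $f_k\in W_k$; hence $Y\cong(\sum_k\oplus W_k)_{\ell_p}\cong(\sum_k\oplus\ell_p^{N_k})_{\ell_p}=\ell_p$ by (a), and for $f=\sum_k f_k\in Y$ one gets $\|f\|_{L_\infty}=\sup_k\|f_k\|_{L_\infty}\le\sup_k\|f_k\|_{L_p}\le\big(\sum_k\|f_k\|_{L_p}^p\big)^{1/p}=\|f\|_{L_p}$ by (b). Thus $Y$ is an isomorphic copy of $\ell_p$ in $L_p(\mathbb R)$ with the required $L_\infty$ bound (after scaling $Y$ by a constant, with constant exactly $1$).

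Now let $J\subseteq\mathbb R$ satisfy $\sup_{t\in J}|x(t)|\neq0$ for all $x\in Y\setminus\{0\}$. Fix $k$. Since $W_k$ is $N_k$--dimensional, is supported on $E_k$, and every point of $E_k^{\,c}$ annihilates $W_k$, the functionals $\{\delta_t|_{W_k}:t\in J\cap E_k\}$ must span $W_k^{*}$, so $J\cap E_k$ contains at least $N_k$ points realising independent functionals; property (c) then gives $\sum_{t\in J}|v_k(t)|^p\ge c\,N_k\,\|v_k\|_{L_p}^p$. Choose scalars $(c_k)$ with $\sum_k|c_k|^p\|v_k\|_{L_p}^p<\infty$ (so that $y:=\sum_k c_k v_k$ converges in $Y$, by disjointness of the $E_k$) but $\sum_k|c_k|^p N_k\|v_k\|_{L_p}^p=\infty$; this is possible because the $N_k$ may be taken to grow arbitrarily fast. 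Then, again by disjointness of supports,
\[
\sum_{t\in J}|y(t)|^p=\sum_k|c_k|^p\sum_{t\in J}|v_k(t)|^p\ \ge\ c\sum_k|c_k|^p N_k\|v_k\|_{L_p}^p=\infty,
\]
which is exactly what we had to produce.

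The crux — and the step I expect to be the main obstacle — is constructing the blocks $W_k$ for $1<p<2$ satisfying (a), (b) and (c) simultaneously. There is a genuine tension: the Rademacher block of Proposition \ref{P:bad} is intrinsically isomorphic to $\ell_2^{N_k}$, so a naive $\ell_p$--sum of such blocks is isomorphic to $(\sum\oplus\ell_2^{N_k})_{\ell_p}$, which is {\it not} isomorphic to $\ell_p$; moreover the passage to $\|\cdot\|_{L_\infty}\le\|\cdot\|_{L_p}$ forces one to adjoin a tail, and one must ensure that this tail does not ``dilute'' the distinguished vector $v_k$ — i.e.\ that $v_k$ stays flat and tall relative to its $L_p$--norm on a set which every separating $J$ is forced to meet in $\gtrsim N_k$ points. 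The construction I would pursue adjoins to $X^{N_k}$ a large, heavily oscillatory tail chosen so that (i) the padded block acquires an $\ell_p^{N_k}$--structure (fixing (a), (b)) and (ii) any set of points separating the padded block, however it is distributed between the tail and the small support $S_k$, still forces the relevant flat vector to accumulate a sum of order $N_k\|v_k\|_{L_p}^p$; verifying (ii) amounts to transporting a quantitative form of Proposition \ref{P:bad}(3) through the padding. If a fixed $v_k$ cannot be made to work for {\it every} separating set, one weakens (c) to: for each separating $(t_j)\subseteq E_k$ there {\it exists} a unit $w\in W_k$ with $\sum_j|w(t_j)|^p\ge c N_k$ (as in Theorem \ref{T:L1}(4)), and runs the argument of the third paragraph with $w_k$ in place of $v_k$.
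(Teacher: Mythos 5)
Your overall architecture — disjointly supported, dilated copies of the finite-dimensional examples with a diagonal vector $y=\sum_k c_k v_k$ whose block norms are $p$-summable while the sampled sums $\sum_{t\in J}|v_k(t)|^p$ are forced to be of order $N_k$ — is exactly the paper's, and your separation argument (the point evaluations from $J\cap E_k$ must span $W_k^*$, hence $|J\cap E_k|\geq N_k$) and the divergence bookkeeping are fine. The gap is precisely at the step you yourself flag as the crux: for $1<p<2$ you never construct blocks $W_k$ with properties (a), (b), (c); you only sketch a hoped-for padding of the Rademacher block by an ``oscillatory tail'' and defer the verification that the tail does not destroy property (c). As written, the proof is therefore incomplete for all $1<p<2$ (the case $p=1$ is fine, since Theorem \ref{T:L1} supplies the blocks and your fallback version of (c) matches its statement (4)).

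Moreover, the premise that forces you into this construction is false in the range where you need it: for $1<p<2$ (indeed for all $1<p<\infty$) the space $(\sum_k\oplus\,\ell_2^{N_k})_{\ell_p}$ \emph{is} isomorphic to $\ell_p$. The span of $n$ Rademachers in $L_p$ of the discrete cube $\{-1,1\}^n$ (which is isometric to $\ell_p^{2^n}$) is $B_pA_p^{-1}$-isomorphic to $\ell_2^n$ and is complemented there by the orthogonal projection $f\mapsto\sum_i\E(fr_i)r_i$, whose norm is at most $B_pB_{p'}$ by Khintchine's inequality applied in $L_p$ and $L_{p'}$; hence $(\sum_k\oplus\,\ell_2^{N_k})_{\ell_p}$ is isomorphic to a complemented subspace of $(\sum_k\oplus\,\ell_p^{m_k})_{\ell_p}=\ell_p$ and so is isomorphic to $\ell_p$ by Pe\l czy\'nski's decomposition argument. (This breaks down only at $p=1$, where the Rademacher span is not uniformly complemented in $L_1$ — that is the one case where your concern is justified, and it is handled by the $\ell_1$-blocks of Theorem \ref{T:L1}.) This is exactly how the paper proceeds: it keeps the $\ell_2^N$-blocks of Proposition \ref{P:bad} unchanged, dilates them onto disjoint intervals to get the bound $\|y\|_{L_\infty}\leq\|y\|_{L_p}$, and identifies the closed span with $(\oplus\,\ell_2^N)_{\ell_p}\cong\ell_p$. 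So the repair is not to build new blocks but to delete requirement (a) in its $\ell_p^{N_k}$ form and replace it by this isomorphism fact; with that change your gluing argument, run with $v_k$ the dilated flat vector $y_1$ of Proposition \ref{P:bad}, goes through verbatim for $1<p<2$.
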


\begin{proof}

We first consider the case $1<p<2$.   By Proposition \ref{P:bad} we have for all $N\in\N$ that there exists a subspace $X^N\subseteq L_p[0,1]$ such that $X^N$ is $A_p^{-1}B_p$-isomorphic to $\ell_2^N$ and the following hold,
\begin{enumerate}
    \item $\|x\|_{L_\infty}\leq A^{-1}_p N^{1/p}\|x\|_{L_p}$ for all $x\in X^N$,\\
    \item If $(t_j)_{j=1}^M\subseteq[0,1]$ are such that $(\sum_{j=1}^M |x(t_j)|^p)^{1/p}>0$ for all $x\in X^N\setminus\{0\}$ then 
    $$ N^{2-p/2}\|x\|_{L_p}^{p}\leq \sum_{j=1}^M |x(t_j)|^p\hspace{1cm}\textrm{ for some }x\in X^N.$$
\end{enumerate}
Let $(M_N)_{N=1}^\infty$ be an increasing sequence of real numbers such that $M_{N+1}>M_N+A^{-p}_pN$ for all $N\in\N$.
For each $N\in\N$, we let $D_N:X^N\rightarrow L_2([M_N, M_N+A_p^{-p} N])$ be the operator defined by for $x\in X^N$,
$$(D_N x)(t)=A_p N^{-1/p} x\big(A_p ^{p}N^{-1}(t-M_N)\big)\hspace{1cm}\textrm{ for all }t\in [M_N, M_N+A_p^{-p} N].$$
Note that $D_N$ is an isometric embedding of $X^{N}$ into $L_2([M_N ,M_N+ A_p^{-p} N])$.  We let $Y^N=D_N(X^{N})$. By (1) we have that $\|y\|_{L_\infty}\leq \|y\|_{L_p}$ for all $y\in Y^N$.  Let $J_N\subseteq\R$ such that 
  $(\sum_{t\in J_N} |y(t)|^p)^{1/p}>0$ for all $y\in Y^N\setminus\{0\}$.  Let $I_N=(J_N-M_N)A_p^{p}N^{-1} $.  By (2), there exists $x_N\in X^N$ with $\|x_N\|^p_{L_p}=N^{p/2-2}$ and $\sum_{t\in I_N} |x_N(t)|^p\geq 1$.  We let $y_N=D_N x_N$ and hence $\|y_N\|^p_{L_p}=N^{p/2-2}$ and $\sum_{t\in J_N} |y_N(t)|^p\geq A_p^{p} N^{-1}$. 

We now let $Y=\overline{\spn}(Y^N)_{N\in\N}\subseteq L_p(\R)$.  As each $Y^N$ is $A_p^{-1}B_p$-isomorphic to $\ell_2^N$ and is composed of functions which are supported on the interval $[M_N,M_N+ A_p^{-p} N]$, we have that $Y$ is isomorphic to $(\oplus \ell_2^N)_{\ell_p}$ and hence $Y$ is isomorphic to $\ell_p$ by Pełczyński's decomposition theorem. For all $y\in Y$, we have that $\|y\|_{L_\infty}\leq \|y\|_{L_p}$.  As, $\|y_N\|^p_{L_p}=N^{p/2-2}$ and $1< p<2$, we have that $\sum_{N=1}^\infty \|y_N\|^p<\infty$.  Thus, $\sum_{N=1}^\infty y_N\in Y$.  We now suppose that $J\subseteq \R$ is such that $\sup_{t\in J}|y(t)|\neq 0$ for all $y\in Y\setminus\{0\}$.  As $(y_N)_{N=1}^\infty$ have pairwise disjoint support, we  have that
$$\sum_{t\in J}\Big|\sum_{N=1}^\infty y_N(t)\Big|^p=\sum_{N=1}^\infty\sum_{t\in J}|y_N(t)|^p\geq\sum_{N=1}^\infty A_p^{p} N^{-1}=\infty.
$$

Thus, we have completed the proof for the case $1<p<2$.  Note that the exact same proof does not work for $p=1$ as $(\oplus \ell_2^n)_{\ell_1}$ is not isomorphic to $\ell_1$.  However, if we instead use Theorem \ref{T:L1} instead of Proposition \ref{P:bad} then the proof follows in the same way.
\end{proof}

\end{document}